\pgfplotsset{compat = 1.17}
\tikzstyle{sline} = [
\newcommand{\plotfontsize}{\footnotesize}
\algrenewcommand\algorithmicrequire{\textbf{Input:}}
\algrenewcommand\algorithmicensure{\textbf{Output:}}
\newcommand{\bx}{\textbf{x}}
\newcommand{\bA}{\textbf{A}}
\newcommand{\bR}{\textbf{R}}
\newcommand{\bQ}{\textbf{Q}}
\newcommand{\bb}{\textbf{b}}
\newcommand{\bc}{\textbf{c}}
\newcommand{\bC}{\textbf{C}}
\newcommand{\bB}{\textbf{B}}
\newcommand{\bI}{\textbf{I}}
\newcommand{\bU}{\textbf{U}}
\newcommand{\bW}{\textbf{W}}
\newcommand{\bV}{\textbf{V}}
\newcommand{\bw}{\textbf{w}}
\newcommand{\bT}{\textbf{T}}
\newcommand{\bf}{\textbf{f}}
\newcommand{\bu}{\textbf{u}}
\newcommand{\bX}{\textbf{X}}
\newcommand{\bD}{\textbf{D}}
\newcommand{\by}{\textbf{y}}
\newcommand{\bs}{\textbf{s}}
\newcommand{\bv}{\textbf{v}}
\newcommand{\be}{\textbf{e}}
\renewcommand{\i}{i\mkern1mu}
\renewcommand{\vec}{\operatorname{vec}}
\newcommand{\hbx}{\hat{\textbf{x}}}
\newcommand{\hbA}{\hat{\textbf{A}}}
\newcommand{\hbB}{\hat{\textbf{B}}}
\newcommand{\hbc}{\hat{\textbf{c}}}
\newcommand{\hy}{\hat{y}}
\newcommand{\R}{\mathbb{R}}
\newcommand{\C}{\mathbb{C}}
\newcommand{\cH}{\mathcal{H}}
\newcommand{\cP}{\mathcal{P}}
\newcommand{\cL}{\mathcal{L}}
\newcommand{\cO}{\mathcal{O}}
\newcommand{\cE}{\mathcal{E}}
\newcommand{\cS}{\mathcal{S}}
\newcommand{\cW}{\mathcal{W}}
\newcommand{\cQ}{\mathcal{Q}}
\newcommand{\cB}{\mathcal{B}}
\newcommand{\ccE}{\mathcal{E}_c}
\newcommand{\coE}{\mathcal{E}_o}
\newcommand{\dt}{\operatorname{d}\hspace{-0.05cm}t}
\newcommand{\pd}[2]{\frac{\partial #1}{\partial #2}}
\newcommand{\Sym}{\operatorname{Sym}}
\newcommand{\krank}{\operatorname{krank}}
\newcommand{\tbW}{\widetilde{\bW}}
\newcommand{\tbw}{\widetilde{\bw}}
\newcommand{\tbC}{\widetilde{\bC}}
\newcommand{\tbQ}{\widetilde{\bQ}}
\newcommand{\tbx}{\widetilde{\bx}}
\newcommand{\tcW}{\widetilde{\cW}}
\newcommand{\tcoE}{\widetilde{\coE}}
\newcommand{\tccE}{\widetilde{\ccE}}
\DeclareMathOperator*{\argmax}{arg\,max}
\newtheorem{lemma}{Lemma}
\newtheorem{theorem}{Theorem}
\newtheorem{proposition}{Proposition}
\newtheorem{rmk}{Remark}
\begin{document}

\title{Energy-Based Approximation of Linear Systems with Polynomial Outputs}

\author[$\ast$]{Linus Balicki}
\author[$\ast$]{Serkan Gugercin}
\affil[$\ast$]{Department of Mathematics, Virginia Tech, Blacksburg, VA, 24061, USA}
          
\keywords{Model reduction, Nonlinear systems, Tensors}

\abstract{
    Controllability and observability energy functions play a fundamental role in model order reduction  and are inherently connected to optimal control problems. For linear dynamical systems the energy functions are known to be quadratic polynomials and various low-rank approximation techniques allow for computing them in a large-scale setting. For nonlinear problems computing the energy functions is significantly more challenging. In this paper, we investigate a special class of nonlinear systems that have a linear state and a polynomial output equation. We show that the energy functions of these systems are again polynomials and investigate under which conditions they can effectively be approximated using low-rank tensors. Further, we introduce a new perspective on the well-established balanced truncation method for linear systems which then readily generalizes to the nonlinear systems under consideration. This new perspective yields a novel energy-based model order reduction procedure that accurately captures the input-output behavior of linear systems with polynomial outputs via a low-dimensional reduced order model. We demonstrate the effectiveness of our approach via two numerical experiments.
}

\novelty{}

\maketitle

\section{Introduction}
In this work we investigate dynamical systems 
that are {linear in the dynamics and polynomial in the output}, that is, dynamical systems of the form
\begin{equation}
\label{eq:posys}
\begin{aligned}
    \dot{\bx}(t) &=\bA \bx(t) + \bB \bu(t) \\
    y(t) &= \bc_1^\top \bx(t) \\
    &\, + \bc_2^\top \left(\bx(t) \otimes \bx(t) \right) \\
    &\, + \cdots \\
    &\, + \bc_d^\top \left(\bx(t) \otimes \cdots \otimes \bx(t) \right),
\end{aligned}
\end{equation}
where $\bA \in \mathbb{R}^{n \times n}$, $\bB \in \mathbb{R}^{n \times m}$, and $\bc_j \in \mathbb{R}^{n^j}$ for $j= 1,\ldots,d$. Here, $\bx(t) \in \mathbb{R}^n$ is the state, $\bu(t) \in \mathbb{R}^m$ the input and $y(t) \in \mathbb{R}$ the output of the system. We note that our discussion can easily be extended to systems with multiple outputs (i.e., $\by(t)\in\R^p$) as we later discuss in Remark~\ref{rmk:MIMO}. However, in order to allow for a more concise presentation of our results, we will only consider the single-output case for now. We will assume that the system~\eqref{eq:posys} is asymptotically stable, i.e., the eigenvalues of $\bA$ lie in the open left-half plane, i.e.,
$\Lambda(\bA) \in \mathbb{C}_{-}$.\\

While the state equation of~\eqref{eq:posys} corresponds to that of a standard linear time-invariant (LTI) system, the output is a degree-$d$ polynomial in the state. Dynamical systems that combine the output of an LTI system with a static nonlinear function are commonly referred to as Wiener models. The system in \eqref{eq:posys} is therefore a special type of Wiener model where the static nonlinearity corresponds to a polynomial. To make a clear distinction between Wiener models with arbitrary nonlinear outputs and systems with polynomial outputs we will refer to the model in \eqref{eq:posys} as an LTI polynomial output (LPO) system. Systems of this type are widely used in nonlinear system identification \cite{Ogunfunmi07,Kalafatis95,Gomez04}. {Furthermore, polynomial outputs allow for locally approximating general nonlinear outputs arbitrarily well \cite{Boyd85}.} Also, in the context of port-Hamiltonian systems \cite{Mehrmann23} it can be of interest to investigate how a Hamiltonian associated with a system evolves with respect to various inputs. In many cases the Hamiltonian can be modelled as a quadratic polynomial leading to an LPO system~\cite{Mehrmann23}. Quadratic outputs also appear in the context of modelling the variance of a quantity of interest of a stochastic model \cite{Haasdonk13}. Systems with quadratic outputs have recently been studied in the context of model order reduction (MOR); see, e.g., \cite{Reiter2024,Gosea2022,Przybilla2024,Pulch2019,Beeumen2012,Gosea19,Haasdonk13,Benner22}.

The controllability energy function 
\begin{equation}
\label{eq:controllabilityenergy}
    \ccE(\bx_0) = \min_{\substack{u \in L_2(-\infty,0] \\ \bx(-\infty)=0 \\ \bx(0)=\bx_0}} \; \; \frac{1}{2} \int_{-\infty}^{0} \lVert \bu(t) \rVert^2 \dt
\end{equation}
and the observability energy function
\begin{equation}
    \label{eq:observabilityenergy}
   \coE(\bx_0) = \frac{1}{2} \int_{0}^{\infty} \lVert y(t) \rVert^2 \dt, \, \bx(0) = \bx_0, \, u(t) \equiv 0
\end{equation}
play a fundamental role in various MOR methods. Specifically, these energy functions are integral parts of the linear and nonlinear balanced truncation (BT) method \cite{mullis1976synthesis,Moore81,Scherpen93,Fujimoto10}. The energy functions are also inherently connected to optimal control problems. In the context of optimal control theory we typically aim to find an optimal control law $\bu^\star$ which minimizes a cost functional such as
\begin{equation*}
    \cE(\bx,\bu) = \int_{0}^\infty h(\bx(t)) + \gamma \lVert \bu(t) \rVert^2 \dt.
\end{equation*}
The energy functions can therefore be considered value functions of optimal control problems, see, e.g., ~\cite{Lewis2012,Breiten2019,BorgZ2020}. While BT is successfully applied to large-scale linear systems, developing computational frameworks for nonlinear systems is still an active  area of research, see, e.g., \cite{Kawano2017,Fujimoto10,Kramer23,Sarkar2024,Corbin2024,Corbin2024_2} and the references therein.
A key bottleneck in the nonlinear balancing framework \cite{Fujimoto10,Kramer23} is the computation of the controllability and observability energy functions. Especially for large $n$ storage and computation of the functions become a critical issue. Applying MOR techniques is particularly useful when the dimension of the model of interest is large ($n \gg 1000$). In this case reduced order models (ROMs) approximate the input-output mapping based on a much lower-dimensional system enabling fast simulations and design of model-based controllers. However, solving optimal control problems and hence computing energy functions for systems with many variables is known to be notoriously difficult and the expression ``curse of dimensionality'' is often used to emphasize this fact. For general nonlinear systems various tensor-based techniques exist to solve optimal control problems \cite{Almubarak2019,Breiten2019,Borggaard2021,Dolgov21,Dolgov23}. Although these generic solution approaches are flexible regarding the structure of the nonlinearity, they are typically limited to systems where the state-space dimension does not exceed a few hundred.  The first central goal of this manuscript is to establish conditions under which the controllability and observability energy functions for LPO systems can be computed and stored efficiently in a large-scale setting. A crucial step in achieving this goal is to exploit the structure of LPO systems in our computational procedure. Beside the computational effort associated with computing the energy functions, another challenge in the nonlinear BT method is that it does not preserve the structure of an underlying nonlinearity in the reduction process. In particular, if nonlinear BT is applied to an LPO system as in~\eqref{eq:posys}, the ROM will in general not have a linear state equation anymore. This makes interpretation and analysis of the ROM difficult and raises practical questions regarding efficient evaluation of the ROM. Motivated by this fact, we introduce a new energy-based MOR procedure that preserves the structure of an LPO system in the reduction process. We show that this new procedure can be viewed as a generalization of the linear BT approach and demonstrate that it performs well in practice. \\

{This paper is organized as follows: In section~\ref{sec:energyfunctions} we derive our first main result that reveals that energy functions of LPO systems are polynomials. Additionally, explicit formulae are provided which allow for computing the polynomial coefficients of the observability energy functions via solving a set of linear systems of equations. Section~\ref{sec:tensoralgebra} briefly introduces tensor decompositions that will be important for our subsequent discussions. A computational procedure and conditions under which the polynomial coefficients of $\coE$ can be computed in a large-scale setting are discussed in section \ref{sec:lrsolutions}. Section~\ref{sec:energybasedMOR} revisits the BT method for linear systems and introduces a new optimization-based perspective for it. Another main contribution is the energy-based MOR algorithm for LPO systems introduced in section~\ref{sec:energybasedapproximation}. It is a natural extension of the optimization-based perspective on BT introduced in section~\ref{sec:energybasedMOR} and presents a novel way for utilizing energy functions in the context of nonlinear MOR. In section~\ref{sec:numericalexamples} we demonstrate the effectiveness of our proposed method via numercial examples. Finally, we summarize our contributions in section~\ref{sec:conclusion}.}

\section{Energy Functions}
\label{sec:energyfunctions}
It is well known that solving optimal control problems is directly related to solving Hamilton-Jacobi-Bellman-type equations~\cite{Lewis2012}. {In the special case of an LPO system as in \eqref{eq:posys}, the energy functions in \eqref{eq:controllabilityenergy} and \eqref{eq:observabilityenergy} are solutions to the Hamilton-Jacobi and Lyapunov PDEs} \cite{Scherpen93}
\begin{align}
    0 &= \pd{\ccE(\bx)}{\bx} \bA \bx + \frac{1}{2} \pd{\ccE(\bx)}{\bx} \bB \bB^\top \pd{\ccE(\bx)^\top}{\bx}~\mbox{and} \label{eq:controllabilityPDE}\\
    0 & = \pd{\coE(\bx)}{\bx} \bA \bx + \frac{1}{2} \sum_{j=1}^d \sum_{k=1}^d (\bc_j \otimes \bc_k )^\top (\bx \otimes \cdots \otimes \bx). \label{eq:observabilityPDE}
\end{align}
Here, the controllability energy corresponds to that of a standard LTI system and, with moderate assumptions,~\eqref{eq:controllabilityPDE} has a unique solution, which is a quadratic polynomial in the form
\begin{equation*}
    \ccE(\bx) = \frac{1}{2} \bx^\top \cP^{-1} \bx.
\end{equation*}
The symmetric positive-definite matrix $\cP \in \mathbb{R}^{n \times n}$ is the controllability gramian and solves the Lyapunov matrix equation
\begin{equation}
\label{eq:controllyap}
    	0 = \bA \cP + \cP \bA^\top + \bB \bB^\top.
\end{equation}
The nonlinear output of the system in \eqref{eq:posys} only plays a role for the PDE in \eqref{eq:observabilityPDE}. Since approximation techniques for the linear matrix equation in \eqref{eq:controllyap} are rather well understood \cite{Simoncini16,Benner2013}, we now focus solely on the observability energy. Our first main result, which gives an explicit representation for \eqref{eq:observabilityenergy}, is given in the next theorem.
\begin{theorem}
    \label{theorem:observabilityenergy}
    Let $\Lambda(\bA) \subset \C_-$. Then the observability energy function of the system in \eqref{eq:posys} is  the unique solution to the Lyapunov PDE in \eqref{eq:observabilityPDE} and satisfies the formula
    \begin{equation}
    \label{eq:observabilitypolynomial}
        \coE(\bx) = \frac{1}{2} \sum_{j=2}^{2d} \bw_j^\top (\bx \otimes \cdots \otimes \bx);
    \end{equation}
    and thus is a degree-$2d$ polynomial. A set of polynomial coefficients $\left\{ \bw_j \right\}_{j=2}^{2d}$, which defines $\coE$, is given by the unique solutions to the linear systems
    \begin{equation}
    \label{eq:kroneckersystem}
    \begin{aligned}
        \cL_k(\bA^\top) \bw_k = - \sum_{i = 1}^{k-1} \bc_i \otimes \bc_{k-i},
    \end{aligned}
    \end{equation}
    where $\cL_k(\bA^\top) \in \mathbb{R}^{n^k \times n^k}$ is defined as
    \begin{equation*}
    \cL_k(\bA^\top) = \sum_{j=1}^k \bI \otimes \cdots \otimes \bI \otimes \underbrace{\bA^\top}_{j\text{-th term}} \otimes \bI \otimes \cdots \bI.
\end{equation*}
\end{theorem}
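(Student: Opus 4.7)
The plan is to substitute a polynomial ansatz for $\coE$ into the Lyapunov PDE \eqref{eq:observabilityPDE}, match coefficients degree-by-degree to recover the Kronecker systems \eqref{eq:kroneckersystem}, and then verify that the resulting polynomial coincides with the observability integral \eqref{eq:observabilityenergy}. Since $y(\bx)$ has degree $d$ in $\bx$, the integrand $y^2$ has total degree $2d$, so the natural ansatz is $\coE(\bx) = \tfrac{1}{2}\sum_{\ell=2}^{2d}\bw_\ell^\top(\bx\otimes\cdots\otimes\bx)$; the absence of a constant term follows from $\coE(0)=0$, and the absence of a linear term follows either from the fact that $\coE$ attains its minimum at $\bx=0$ or from an empty right-hand side in the degree-$1$ matching equation below.

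The core computational ingredient is a Kronecker chain-rule identity
\[
\pd{}{\bx}\bigl(\bw^\top \bx^{\otimes \ell}\bigr)\,\bA\bx = \bigl(\cL_\ell(\bA^\top)\bw\bigr)^\top \bx^{\otimes \ell},\quad \bw\in\R^{n^\ell},
\]
which follows by applying the product rule to the $\ell$ copies of $\bx$ in $\bx^{\otimes \ell}$, rewriting the $p$th summand as $\bw^\top(\bI\otimes\cdots\otimes\bA\otimes\cdots\otimes\bI)\bx^{\otimes\ell}$ with $\bA$ in the $p$th slot, and transposing. Summing over the ansatz turns the first term of \eqref{eq:observabilityPDE} into $\tfrac12\sum_{\ell=2}^{2d}(\cL_\ell(\bA^\top)\bw_\ell)^\top\bx^{\otimes\ell}$.

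The second term of \eqref{eq:observabilityPDE} is $\tfrac12\sum_{j,k=1}^d (\bc_j\otimes\bc_k)^\top\bx^{\otimes(j+k)}$. Regrouping by total degree $\ell = j+k$, with the convention $\bc_i=0$ for $i>d$, yields $\tfrac12\sum_{\ell=2}^{2d}\bigl(\sum_{i=1}^{\ell-1}\bc_i\otimes\bc_{\ell-i}\bigr)^\top\bx^{\otimes\ell}$. Since the PDE must hold for all $\bx\in\R^n$, equating the coefficient tensors on each $\bx^{\otimes\ell}$ forces $\cL_\ell(\bA^\top)\bw_\ell=-\sum_{i=1}^{\ell-1}\bc_i\otimes\bc_{\ell-i}$, i.e.\ precisely \eqref{eq:kroneckersystem}. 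Invertibility of $\cL_\ell(\bA^\top)$ follows from the standard Kronecker-sum spectral identity: its eigenvalues are the $\ell$-fold sums $\lambda_{i_1}+\cdots+\lambda_{i_\ell}$ of eigenvalues of $\bA$, each of which has strictly negative real part because $\Lambda(\bA)\subset\C_-$. Hence each $\bw_\ell$ is uniquely determined.

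To close the loop, write $P(\bx)$ for the polynomial built from these $\bw_\ell$. By construction $P$ solves \eqref{eq:observabilityPDE}, so along a trajectory of \eqref{eq:posys} with $\bu\equiv 0$ one has $\tfrac{d}{dt}P(\bx(t)) = \pd{P}{\bx}\bA\bx = -\tfrac12 y(t)^2$, using that $y^2 = \sum_{j,k}(\bc_j\otimes\bc_k)^\top \bx^{\otimes(j+k)}$. Asymptotic stability of $\bA$ ensures $\bx(t)\to 0$ and hence $P(\bx(t))\to P(0)=0$ as $t\to\infty$, so integrating from $0$ to $\infty$ yields $P(\bx_0) = \tfrac12\int_0^\infty y(t)^2\,\dt = \coE(\bx_0)$, matching \eqref{eq:observabilityenergy}; uniqueness of $\coE$ as a solution to the PDE then follows from the uniqueness of $\bw_\ell$ in the polynomial class, together with standard Hamilton--Jacobi theory under asymptotic stability, e.g.\ \cite{Scherpen93}. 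The main obstacle is the Kronecker chain-rule identity in the second paragraph: the index bookkeeping when reassembling the $\ell$ partial-derivative terms into $\cL_\ell(\bA^\top)\bw$ is subtle but routine, and once that identity is in hand the coefficient matching, the spectral-invertibility argument, and the integral identification are all short.
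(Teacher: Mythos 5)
Your proof is correct, but it takes a genuinely different route from the paper. The paper works directly from the integral definition \eqref{eq:observabilityenergy}: it writes the free response as $\bx(t)=e^{\bA t}\bx_0$, expands $y(t)^2$, groups terms by degree, and identifies each coefficient through the identity $\cL_k(\bA)^{-1}=-\int_0^\infty e^{\bA t}\otimes\cdots\otimes e^{\bA t}\,\dt$. That choice is deliberate: the integral representation of $\cL_k(\bA)^{-1}$ is exactly the object the quadrature-based low-rank solver of Section~\ref{sec:lrsolutions} discretizes, so the proof doubles as the derivation of the computational scheme. You instead go through the PDE: a degree-$2d$ ansatz, the Kronecker chain-rule identity $\pd{}{\bx}\bigl(\bw^\top\bx^{\otimes\ell}\bigr)\bA\bx=(\cL_\ell(\bA^\top)\bw)^\top\bx^{\otimes\ell}$, coefficient matching to recover \eqref{eq:kroneckersystem}, invertibility via the Kronecker-sum spectral identity, and a Lyapunov-type trajectory argument $\tfrac{d}{dt}P(\bx(t))=-\tfrac12 y(t)^2$ to tie the PDE solution back to the integral. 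Your route makes explicit two things the paper only asserts or cites: that the polynomial actually satisfies \eqref{eq:observabilityPDE}, and why the systems are uniquely solvable (eigenvalues $\lambda_{i_1}+\cdots+\lambda_{i_\ell}$ with negative real parts). One small caution: when you say the PDE ``forces'' $\cL_\ell(\bA^\top)\bw_\ell=-\sum_i\bc_i\otimes\bc_{\ell-i}$, that overstates necessity --- a homogeneous form only determines its coefficient tensor up to symmetrization (cf.\ Lemma~\ref{proposition:permutedsolutions}), so the PDE only forces the symmetrized residual to vanish. For your construction this is harmless, since any exact solution of the unsymmetrized system certainly yields a PDE solution, but the sufficiency direction is the one you should lead with.
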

\begin{proof}
    Consider the observability energy defined in \eqref{eq:observabilityenergy} and note that $\bu(t) \equiv 0$ and $\bx(0) = \bx_0$. Due to the assumption $\Lambda(\bA) \subset \C_-$, the state of the LPO system in \eqref{eq:posys} can be written explicitly as $\bx(t) = e^{\bA t} \bx_0$. Plugging the resulting expression into $y(t)$ gives
    \begin{equation*}
    \begin{aligned}
                y(t) &= \bc_1^\top e^{\bA t} \bx_0 + \ldots + \bc_d^\top \left(e^{\bA t} \bx_0 \otimes \cdots \otimes e^{\bA t} \bx_0 \right).
    \end{aligned}
    \end{equation*}
    This allows for writing the output energy as
    \begin{equation*}
        \lVert y(t) \rVert^2 = \sum_{k=2}^{2d} \sum_{i = 1}^{k-1} (\bc_i \otimes \bc_{k-i})^\top \\
        \overbrace{(e^{\bA t} \bx_0 \otimes \cdots \otimes e^{\bA t} \bx_0)}^{k \text{ terms}}
    \end{equation*}
    and thus yields the expression 
    \begin{equation*}
        \begin{aligned}
        \coE(\bx_0) = \frac{1}{2} \sum_{k=2}^{2d} \sum_{i = 1}^{k-1} & (\bc_i \otimes \bc_{k-i})^\top\\
        \int_0^\infty (e^{\bA t}  \otimes & \cdots \otimes e^{\bA t} )\dt \left( \bx_0 \otimes \cdots \otimes \bx_0 \right). \\
        \end{aligned}
    \end{equation*}
    The representation above immediately shows that $\coE$ is a degree-$2d$ polynomial in $\bx_0$. Further, the degree-$k$ polynomial coefficient of $\coE$ is given by 
    \begin{equation*}
        \bw_k^\top = \sum_{i = 1}^{k-1} (\bc_i \otimes \bc_{k-i})^\top \int_0^\infty (e^{\bA t}  \otimes \cdots \otimes e^{\bA t} )\dt.
    \end{equation*}
    Next, we use the fact that $\Lambda(\bA) \subset \C_-$ implies
    \begin{equation}
    \label{eq:inverseintegral}
        \cL_k(\bA)^{-1} = -\int_0^\infty (e^{\bA t}  \otimes \cdots \otimes e^{\bA t} )\dt,
    \end{equation}
    see, e.g., \cite{Grasedyck03}. We then obtain
    \begin{equation*}
        \bw_k^\top = -\sum_{i = 1}^{k-1} (\bc_i \otimes \bc_{k-i})^\top \cL_k(\bA)^{-1}.
    \end{equation*}
    Taking the transpose and multiplying both sides of the above equation with $\cL_k(\bA^\top)$ concludes the proof.
\end{proof}
{The preceding theorem offers two significant insights into the energy functions of LPO systems. First, the energy functions are exactly represented by polynomials of degree $2d$. This is in stark contrast to the general nonlinear setting, where the observability energy functions are  analytic function and are typically merely approximated by polynomials \cite{Kramer24,Dolgov21}. Second, Theorem \ref{theorem:observabilityenergy} shows that we can compute observability energy functions explicitly by solving  linear systems of equations. Importantly, these linear systems are uniquely solvable implied by the stability condition $\Lambda(\bA) \subset \C_-$ \cite{Kramer24,Grasedyck03}.} 
A key tool in the proof of the theorem is the relationship in \eqref{eq:inverseintegral}. We will revisit this equation in Section~\ref{sec:lrsolutions} as it will constitute the basis of our computational procedure. For moderately sized problems (e.g., $n \leq 1000$ and $d \leq 2$) explicit computation of the polynomial coefficients $\bw_k \in \mathbb{R}^{n^k}$ is typically tractable. For large-scale systems however, even storing the polynomial coefficients is an infeasible task. A widely used approach to overcome this obstacle is to store the polynomial coefficients implicitly in terms of a low-rank tensor decomposition. Note that the polynomial coefficients $\bw_k$ correspond to a $k$-th order tensor $\cW_k \in \mathbb{R}^{n \times \cdots \times n}$, which can be defined via the relationship $\vec(\cW_k) = \bw_k$ \cite{Kolda09}. This connection motivates studying efficient storage formats for tensors in order to enable scalable computations for the polynomial coefficients of $\coE$. As we will discuss in detail later, the coefficients that define a multivariate polynomial as in  \eqref{eq:observabilitypolynomial} are not unique. We will show that there exist other uniquely solvable linear systems, which yield sets of polynomial coefficients that also define $\coE$. Another aspect of our discussion will reveal which of the linear systems should be considered when effective storage via low-rank tensor decompositions is a central goal.
\begin{rmk}
\label{rmk:MIMO}
{In many applications we are interested in systems with multiple inputs and multiple outputs (MIMO systems). In particular, consider the system in \eqref{eq:posys} with $p$ polynomial outputs given by $\by(t) = [y_1(t),\ldots,y_p(t)]^\top \in \R^p$, where $y_j(t) = \sum_{i=1}^{d_j} \bc_{ji}^\top \left( \bx(t) \otimes \cdots \otimes \bx(t) \right)$. In this case we can follow the proof of Theorem~\ref{theorem:observabilityenergy} to show that the observability energy function of LPO MIMO systems is again a polynomial along the lines of \eqref{eq:observabilityenergy} where the coefficients now solve linear systems of the form
    \begin{equation*}
        \cL_k(\bA^\top) \bw_k = -\sum_{j=1}^p \sum_{i=1}^{d_j} \bc_{ji} \otimes \bc_{j(k-i)}.
    \end{equation*}
    To make our discussion easier to follow we will stick to the single output case and note that all of our results can easily be extended to the MIMO setting.}
\end{rmk}

\section{Tensor Algebra Preliminaries}
\label{sec:tensoralgebra}
In the following we consider a $k$-th order tensor
\begin{equation*}
    \cW \in \mathbb{R}^{\overbrace{n \times \cdots \times n}^{k \text{ terms}}}
\end{equation*}
with uniform mode size $n$. We begin by reviewing the canonical decomposition of such a tensor, which plays an important role in our computational procedure.
\subsection{Canonical Decomposition}
A canonical (CP) decomposition \cite{Hitchcock27,Kolda09} for $\cW$ is defined via matrices $\bU_1,\ldots,\bU_k \in \mathbb{R}^{n \times R}$ which allow for the element-wise representation 
\begin{equation*}
    \cW(i_1,\ldots,i_k) = \sum_{j=1}^R \bU_1(i_1,j) \cdots \bU_k(i_k,j).
\end{equation*}
The matrices $\bU_1,\ldots,\bU_k$ are called the CP factors and the smallest $R$ for which such a decomposition exists defines the tensor rank of $\cW$. Note that the tensor $\cW$ has $n^k$ entries while the CP factors are formed by a total of $nkR$ entries. Hence, if a CP decomposition for $\cW$ with small $R$ exists we can use it to store the tensor efficiently even for large $n$ and $k$. A representation of the CP decomposition which will be useful in our analysis is that of a vectorized tensor given by the sum of Kronecker products
\begin{equation}
    \label{eq:veccp}
    \bw := \vec(\cW) = \sum_{j=1}^R \bU_1(:,j) \otimes \cdots \otimes \bU_k(:,j) \in \mathbb{R}^{n^k},
\end{equation}
{where $\bU_i(:,j) \in \R^n$ denotes the $j$-th column vector of the matrix $\bU_i$.} We will carry out the majority of our theoretical discussion based on the above Kronecker structured representation of a tensor. Due to its direct connection to the tensor CP decomposition we call the representation in \eqref{eq:veccp} the CP decomposition for $\bw$. Following the definition in \cite{Grasedyck03} we define the Kronecker rank of a vector $\bw \in \R^{n^k}$ as the smallest $R$ such that $\bw$ exhibits a (vectorized) CP decomposition along the lines of \eqref{eq:veccp}. In the following we denote the Kronecker rank of a vector $\bw$ as $\krank(\bw)$. We point out that the Kronecker rank of $\bw$ and the tensor rank of $\cW$ are equal.
\subsection{Symmetric Tensors}
\label{sec:symmetrictensors}
For the nonlinear BT framework \cite{Kramer23} as well as our proposed MOR approach, symmetric tensors \cite{Comon08} will play an important role. We begin by defining the symmetrization of a vector $\bu_1 \otimes \cdots \otimes \bu_k$ with Kronecker rank $1$ as
\begin{equation}
    \label{eq:symmetrization}
    \Sym\left[ \bu_1 \otimes \cdots \otimes \bu_k \right] = \frac{1}{k!}\sum_{\tau \in \Pi_k} \bu_{\tau(1)} \otimes \cdots \otimes \bu_{\tau(k)},
\end{equation}
where $\Pi_k$ denotes the symmetric group of permutations of $\{ 1,\ldots,k \}$.
The symmetrization of a general vector in $\R^{n^k}$ is then defined as the symmetrization of the terms that constitute its CP decomposition. For example, we have
\begin{equation*}
    \Sym\left[\bw\right] = \frac{1}{k!} \sum_{j=1}^R \sum_{\tau \in \Pi_k} \bU_{\tau(1)}(:,j) \otimes \cdots \otimes \bU_{\tau(k)}(:,j).
\end{equation*}
An important observation is that the sum above now consists of $k!R$ terms. Hence, symmetrization can increase the Kronecker rank by a factor of up to $k!$. We call $\bw$ (and the corresponding tensor $\cW$) symmetric if it satisfies $\Sym\left[ \bw \right] = \bw$. Equivalently, $\bw$ is symmetric if any permutation of its CP factors yields the same vector:
\begin{align*}
    \bw &= \sum_{j=1}^R \bU_1(:,j) \otimes \cdots \otimes \bU_k(:,j) \\
    &= \sum_{j=1}^R \bU_{\tau(1)}(:,j) \otimes \cdots \otimes \bU_{\tau(k)}(:,j) \; \text{ for all } \; \tau \in \Pi_k.
\end{align*}
In order to ease notation we define $\tau(\bw)$ as the vector where $\tau \in \Pi_k$ is applied to the ordering of the $k$ CP factors:
\begin{equation*}
    \tau(\bw) := \sum_{j=1}^R \bU_{\tau(1)}(:,j) \otimes \cdots \otimes \bU_{\tau(k)}(:,j).
\end{equation*}
Another useful fact about symmetric tensors is that they exhibit a symmetric CP decomposition 
\begin{equation*}
    \bw = \sum_{j=1}^R \bU(:,j) \otimes \cdots \otimes \bU(:,j)
\end{equation*}
in which all CP factors are given by the same matrix $\bU$. Note that the symmetrization of an arbitrary tensor will typically not yield such a form directly. While a symmetric CP decomposition exists for any symmetric tensor it is typically difficult to compute. Regardless, the symmetric CP decomposition will be a useful theoretical tool for us.
\section{Computing the Observability Energy Function}
\label{sec:lrsolutions}
Theorem \ref{theorem:observabilityenergy} shows that solving the Lyapunov PDE in \eqref{eq:observabilityPDE} amounts to solving the $2d-1$ linear systems in \eqref{eq:kroneckersystem}. In the following, we revisit the computational procedure introduced in \cite{Grasedyck03} which leverages the CP decomposition in order to approximately solve the linear systems in \eqref{eq:kroneckersystem}. Then we show how to take additional advantage of the structure of LPO systems in order to reduce the required storage of the polynomial coefficients $\bw_k$ even further. We emphasize that this discussion is essential for effectively carrying out the computations necessary for our MOR procedure we describe in Section~\ref{sec:energybasedapproximation}.
\subsection{Low-Rank Solutions to Linear Systems}
We begin by recalling a result from \cite{Grasedyck03,Kressner10}
which reveals under which conditions it is reasonable to expect approximations based on vectors with low Kronecker rank to be effective. Additionally, this result yields a formula for approximate solutions to linear systems as they appear in \eqref{eq:kroneckersystem}.
\begin{theorem}
    \label{theorem:quadratureapproximation}
    Let $\bA \in \mathbb{R}^{n \times n}$ be diagonalizable with the eigendecomposition $\bA = \bX \bD \bX^{-1}$ and $\Lambda(\bA) \in [-\lambda_{max},\allowbreak -\lambda_{min}] \oplus \i \left[ -\mu,\mu \right]$, where $\lambda_{min} > 0$. Let $\bb \in \mathbb{R}^{n^k}$ and $\cL_k(\bA) \bx = \bb$. Define $h_{st} = \frac{\pi}{\sqrt{\ell}}$ for $\ell \in \mathbb{N}$,
    \begin{align*}
        \alpha_i &= \log\left( \exp(i h_{st}) + \sqrt{1 + \exp(2 i h_{st}})\right) \Big/ (k \lambda_{min} ),~\mbox{and} \\
        \omega_i &= h_{st} \Big/ \left(\sqrt{1 + \exp(-2 i h_{st})} k \lambda_{min} \right),
    \end{align*}
    for $i = -\ell,\ldots,\ell$. Then the quadrature-based approximation 
    \begin{equation}
        \label{eq:quadrature}
        \bx_\ell = - \sum_{i =-\ell}^{\ell} \omega_i \left(\exp\left(\alpha_i \bA\right) \otimes \cdots \otimes \exp\left(\alpha_i \bA\right) \right) \bb
    \end{equation}
    to the solution of $\cL_k(\bA) \bx = \bb$
    satisfies the error bound
    \begin{equation}
        \label{eq:quadratureerror}
        \left\lVert \bx - \bx_\ell \right\rVert \leq \frac{\widetilde{C} \kappa_2(\bX)^k}{k \lambda_{min}} \exp\left( \frac{\mu}{ \lambda_{min} \pi} \right) \exp(-\pi \sqrt{\ell}) \lVert \bb \rVert,
    \end{equation}
    where $\widetilde{C} > 0$ is independent of $\ell$, $k$, $\bA$ and $\bb$. If $\bA$ is symmetric, the bound simplifies to
    \begin{equation*}
        \left\lVert \bx - \bx_\ell \right\rVert \leq \frac{\widetilde{C}}{k \lambda_{min}} \exp(-\pi \sqrt{\ell}) \lVert \bb \rVert.
    \end{equation*}
\end{theorem}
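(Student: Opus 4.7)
The plan is to derive the approximation as a numerical quadrature of the integral representation of the inverse $\cL_k(\bA)^{-1}$ already exploited in the proof of Theorem~\ref{theorem:observabilityenergy}, and then to invoke the standard sinc / double-exponential quadrature error estimate. Concretely, since $\Lambda(\bA)\subset\C_-$, equation~\eqref{eq:inverseintegral} gives the exact representation
\begin{equation*}
    \bx \;=\; -\int_0^\infty \bigl(e^{\bA t}\otimes\cdots\otimes e^{\bA t}\bigr)\,\bb\;\dt,
\end{equation*}
so the task reduces to approximating a matrix-valued, scalar integral on the half-line. The approximation $\bx_\ell$ in \eqref{eq:quadrature} will be shown to be exactly the composite trapezoidal (sinc) rule applied to this integral after a suitable variable transformation.

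First, I would perform the change of variables $t \;=\; \log\!\bigl(e^{s h_{st}} + \sqrt{1+e^{2s h_{st}}}\bigr)\big/(k\lambda_{\min})$ with continuous parameter $s\in\R$; this maps $[0,\infty)\to \R$ and is exactly the one whose discretization at $s=i\in\{-\ell,\ldots,\ell\}$ reproduces the nodes $\alpha_i$ and weights $\omega_i$ given in the statement. The purpose of this transformation is twofold: it absorbs the end-point behavior at $t=0$ and it yields a transformed integrand whose decay as $|s|\to\infty$ is doubly exponential, which is what makes the trapezoidal rule on $\R$ converge at the rate $\exp(-c\sqrt{\ell})$ after truncation to $2\ell+1$ nodes with $h_{st}=\pi/\sqrt{\ell}$.

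Next, I would invoke the classical sinc-quadrature error estimate on a strip $\{|\Im s|<\delta\}$ of analyticity of the transformed integrand. Using the eigendecomposition $\bA=\bX\bD\bX^{-1}$ one writes $e^{\alpha\bA}\otimes\cdots\otimes e^{\alpha\bA} = (\bX\otimes\cdots\otimes\bX)\bigl(e^{\alpha\bD}\otimes\cdots\otimes e^{\alpha\bD}\bigr)(\bX^{-1}\otimes\cdots\otimes\bX^{-1})$, which produces the $\kappa_2(\bX)^k$ factor in the bound and reduces the analysis to scalar exponentials $e^{\alpha\lambda}$ with $\lambda\in\Lambda(\bA)$. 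The spectral assumption $\Lambda(\bA)\in[-\lambda_{\max},-\lambda_{\min}]\oplus\i[-\mu,\mu]$ controls (i) the width of the admissible analyticity strip via $\mu/\lambda_{\min}$, giving the factor $\exp(\mu/(\lambda_{\min}\pi))$, and (ii) the truncation error of the infinite trapezoidal sum via $\lambda_{\min}$. Balancing the discretization and truncation errors with the choice $h_{st}=\pi/\sqrt{\ell}$ produces the $\exp(-\pi\sqrt{\ell})$ rate. When $\bA$ is symmetric one has $\mu=0$ and $\kappa_2(\bX)=1$, collapsing the bound to the stated simpler form.

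The main obstacle, and the part requiring the most care, is the sinc-quadrature estimate itself: one must verify analyticity of $s\mapsto e^{\alpha(s)\bA}\otimes\cdots\otimes e^{\alpha(s)\bA}\bb$ in a horizontal strip, verify the decay as $|\Re s|\to\infty$ needed to control the tail of the trapezoidal sum outside $[-\ell,\ell]$, and track how the $k$-fold Kronecker factorization multiplies constants (yielding the $k$ in the denominator and $\kappa_2(\bX)^k$ in the numerator). Rather than re-deriving these estimates I would simply cite the sinc-quadrature analysis of \cite{Grasedyck03,Kressner10}, noting that the formulas for $\alpha_i$, $\omega_i$, and $h_{st}$ in the statement are precisely the ones produced by that analysis; the remaining work is then just identifying~\eqref{eq:quadrature} with the trapezoidal sum and collecting constants.
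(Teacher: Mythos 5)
Your proposal follows essentially the same route as the paper, which itself only sketches the argument and defers to \cite{Grasedyck03,Kressner10}: interpret $\bx_\ell$ as a truncated sinc/trapezoidal quadrature of the integral representation \eqref{eq:inverseintegral} after the $\sinh$-type change of variables generating the stated $\alpha_i$ and $\omega_i$, and cite the standard strip-analyticity error analysis for the $\exp(-\pi\sqrt{\ell})$ rate and the $\kappa_2(\bX)^k$ and $\exp(\mu/(\lambda_{\min}\pi))$ factors. Your identification of the nodes and weights with the transformed trapezoidal rule and of the spectral hypotheses with the analyticity strip is correct, so the argument is sound and matches the paper's intended proof.
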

The proof of the theorem can be found in \cite{Kressner10} for a more general class of Kronecker structured linear systems. The key idea of the proof is to use a quadrature rule to approximate the integral in \eqref{eq:inverseintegral}. The corresponding quadrature nodes are denoted by $\alpha_j$ and quadrature weights by $\omega_j$. Note that if the right-hand side  $\bb$ is given in terms of a CP decomposition 
\begin{equation*}
    \bb = \sum_{j=1}^R \bb_1(:,j) \otimes \cdots \otimes \bb_k(:,j),
\end{equation*}
then the solution $\bx_\ell$ can be approximated efficiently and with arbitrary accuracy based on the formula in \eqref{eq:quadrature}:
\begin{alignat*}{2}
    \bx_\ell &= - \sum_{i =-\ell}^{\ell} \sum_{j=1}^R \omega_i \big(\exp \left(\alpha_i  \bA\right) && \bb_1(:,j) \otimes \cdots \\
    & && \otimes \exp\left(\alpha_i \bA\right)\bb_k(:,j) \big).
\end{alignat*}
Most importantly, the formula above represents $\bx_\ell$ directly in the CP format. The resulting Kronecker rank of the approximated solution is thus bounded by $(2\ell+1)R$ while the approximation error in \eqref{eq:quadratureerror} decays at a rate of $\cO(\exp(-\sqrt{\ell}))$. This means good low-rank tensor approximations are guaranteed to exist if the error bound in Theorem \ref{theorem:quadratureapproximation} is small for a relatively small $\ell$ and the right-hand side $\bb$ is a low-rank tensor. Whether such $\ell$ exist or not primarily depends on the spectral properties of the matrix $\bA$. For example, if $\bA$ is symmetric and none of its eigenvalues are close to the origin we can obtain small error bounds using small $\ell$. This is, for example, satisfied for the discretized Laplace operator, which is well-known to allow for good approximations in the context of Kronecker structured linear systems \cite{Hackbusch06}. Aside from the spectral properties of $\bA$, the Kronecker ranks of the right-hand side of the linear systems in \eqref{eq:kroneckersystem} significantly impact the Kronecker rank of the energy function coefficients $\bw_k$.
The next result examines the Kronecker rank of the right-hands of the linear system~\eqref{eq:kroneckersystem} arising in the computation of the observability energy function of LPO systems. 
\begin{proposition}
    Consider the polynomial coefficients $\bc_1,\allowbreak\ldots,\bc_d$ of the LPO system in \eqref{eq:posys}. Assume that the Kronecker ranks of $\bc_2,\ldots,\bc_d$ are less than or equal to $R$. Then the Kronecker rank of the right-hand side of the linear system~\eqref{eq:kroneckersystem} in Theorem \ref{theorem:observabilityenergy}, namely
    \begin{equation}
        \label{eq:rhs}
        \bC_k := - \sum_{i = 1}^{k-1} \bc_i \otimes \bc_{k-i},
    \end{equation}
    is bounded by $(k-3)R^2 + 2R$ for $k \geq 3$ and equal to $1$ for $k=2$.
\end{proposition}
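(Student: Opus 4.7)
The plan is to exploit two elementary properties of the Kronecker rank that follow directly from the definition in \eqref{eq:veccp}: sub-additivity, $\krank(\bu+\bv) \le \krank(\bu)+\krank(\bv)$, proved by concatenating CP decompositions, and sub-multiplicativity under Kronecker products, $\krank(\bu \otimes \bv) \le \krank(\bu) \cdot \krank(\bv)$, proved by distributing the outer Kronecker product across each pair of rank-one summands from CP decompositions of $\bu$ and $\bv$. The key structural observation is that $\bc_1 \in \R^n$ is a first-order object and therefore satisfies $\krank(\bc_1) \le 1$ trivially, while the hypothesis gives $\krank(\bc_i) \le R$ for $i \ge 2$. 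This asymmetry between $\bc_1$ and the higher-order coefficients is precisely what drives the bound.

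The base case $k=2$ is immediate: $\bC_2 = -\bc_1 \otimes \bc_1$ is a single Kronecker product of two rank-one objects, so $\krank(\bC_2) \le 1$. For $k \ge 3$, I split the index set $\{1,\ldots,k-1\}$ in the sum $\bC_k = -\sum_{i=1}^{k-1} \bc_i \otimes \bc_{k-i}$ into the two boundary indices $i \in \{1,k-1\}$, for which one of the factors is $\bc_1$, and the interior indices $2 \le i \le k-2$, for which both factors are higher-order $\bc$'s. For each boundary term, sub-multiplicativity yields $\krank(\bc_1 \otimes \bc_{k-1}) \le 1 \cdot R = R$ (and similarly for $\bc_{k-1} \otimes \bc_1$), contributing $2R$ in total. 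For each of the $k-3$ interior terms, sub-multiplicativity yields $\krank(\bc_i \otimes \bc_{k-i}) \le R \cdot R = R^2$. Sub-additivity applied to the whole sum then assembles these contributions into the claimed bound $(k-3)R^2 + 2R$.

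Honestly, there is no substantial obstacle here; the argument is essentially bookkeeping once the two Kronecker-rank inequalities are in hand. The only subtlety worth flagging in the write-up is the convention that a first-order tensor has Kronecker rank one, which is what makes the boundary terms cheaper than the interior terms and accounts for the linear (rather than quadratic) contribution $2R$ in the final bound. A brief explicit check for $k=3$ and $k=4$ is worth including to confirm that the count $k-3$ of interior indices correctly yields $2R$ and $R^2 + 2R$, respectively, matching the stated formula.
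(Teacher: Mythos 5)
Your proof is correct and follows essentially the same route the paper intends: the paper only remarks that the proposition ``follows from performing various arithmetic operations using Kronecker products,'' and the detailed version of exactly your argument (rank $\le R^2$ for the interior products $\bc_i \otimes \bc_{k-i}$ with $i,k-i \ge 2$, rank $\le R$ for the boundary products involving $\bc_1$ since $\krank(\bc_1)=1$, then sub-additivity over the $k-1$ summands) appears explicitly in the paper's proof of Lemma~\ref{lemma:minimalC}. The only cosmetic point is that for $k=2$ you conclude $\krank(\bC_2)\le 1$ while the statement asserts equality, which requires the tacit assumption $\bc_1 \neq 0$.
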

The proposition follows from performing various arithmetic operations using Kronecker products. As a result we obtain a connection between the Kronecker ranks of the polynomial coefficients that constitute the output in \eqref{eq:posys} and the Kronecker ranks of the right-hand side of the linear systems in \eqref{eq:kroneckersystem}. In particular, we need to assume that the polynomial output coefficients are vectors with small Kronecker ranks in order for accurate low-rank approximations of the $\bw_k$ to exist. We note that the Kronecker rank of the right-hand side $\bb$ also plays a role in the computational complexity associated with evaluating the quadrature formula in \eqref{eq:quadrature}. In particular, it is necessary to evaluate the matrix exponential $\exp(\alpha_i\bA)$ applied to the $kR$ vectors $\bb_1(:,1),\ldots,\bb_k(:,R)$. If $n$ is not too large and $\bA$ is diagonalizable, we can compute, e.g., the eigendecomposition $\bA = \bX \bD \bX^{-1}$ and write
\begin{equation*}
    \exp(\alpha_i\bA) = \bX^{-\top} \exp(\alpha_i \bD) \bX^\top.
\end{equation*}
Given such a decomposition we can evaluate the quadrature rule by solving a linear system of equations with $\ell kR$ right-hand sides. Similar computations can be performed via the Schur decomposition of $\bA$ as well. If, on the other hand, the dimension of the problem does not allow for computing a full decomposition of $\bA$, one typically resorts to Krylov subspace methods to compute the application of the matrix exponential to a vector \cite{Marlis1997}. In either case, it is beneficial to work with vectors that have a low Kronecker rank in order to keep computations tractable.
\subsection{Exploiting Symmetries}
While the solutions to the linear systems in \eqref{eq:kroneckersystem} yield a set of polynomial coefficients $\{ \bw_j\}_{j=2}^{2d}$ that define $\coE$, there exist many other vectors that define the same polynomial. The following result characterizes other linear systems that can be solved for the different sets of polynomial coefficients. 
\begin{lemma}    \label{proposition:permutedsolutions}
    Let $\bA \in \mathbb{R}^{n \times n}$,  $\bb \in \mathbb{R}^{n^k}$ and $k$ such that the linear system
    \begin{equation*}
        \cL_k(\bA) \bw = \bb
    \end{equation*}
    has a unique solution $\bw \in \mathbb{R}^{n^k}$. Then for any symmetric permutation $\tau \in \Pi_k$, we have
    \begin{equation*}
        \cL_k(\bA) \tau(\bw) = \tau(\bb).
    \end{equation*}
    Further, $\bw$ and $\tau(\bw)$ define the same degree-$k$ homogenous polynomial
    \begin{equation*}
        \bw^\top\left( \bx \otimes \cdots \otimes \bx\right) = \tau(\bw)^\top\left( \bx \otimes \cdots \otimes \bx\right).
    \end{equation*}
\end{lemma}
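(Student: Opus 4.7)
The plan is to identify each permutation $\tau \in \Pi_k$ with a permutation matrix $P_\tau \in \R^{n^k \times n^k}$ acting on vectorized $k$-tensors by
\[
    P_\tau(\bv_1 \otimes \cdots \otimes \bv_k) = \bv_{\tau(1)} \otimes \cdots \otimes \bv_{\tau(k)},
\]
extended by linearity to all of $\R^{n^k}$. This makes $\tau(\bw) = P_\tau \bw$ match the notation already introduced in Section~\ref{sec:symmetrictensors}, and has the additional benefit that the operation is then manifestly independent of the CP decomposition chosen to represent $\bw$.

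The first step I would carry out is to establish the commutation relation $P_\tau \cL_k(\bA) = \cL_k(\bA) P_\tau$. By linearity it suffices to verify this on pure tensors $\bw = \bv_1 \otimes \cdots \otimes \bv_k$. Expanding $\cL_k(\bA)\bw$ as the sum of $k$ tensors in which $\bA\bv_i$ replaces $\bv_i$ in position $i$, and then applying $P_\tau$, one obtains a sum in which $\bA\bv_i$ now appears in position $\tau^{-1}(i)$. Substituting $j = \tau^{-1}(i)$ re-expresses the result as $\cL_k(\bA) P_\tau \bw$, yielding the desired commutation. Applying it to the given equation $\cL_k(\bA)\bw = \bb$ then immediately gives $\cL_k(\bA)\tau(\bw) = P_\tau \bb = \tau(\bb)$, which is the first assertion of the lemma.

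For the second assertion, the central observation is that $\bx \otimes \cdots \otimes \bx$ is a symmetric tensor, so $P_\tau(\bx \otimes \cdots \otimes \bx) = \bx \otimes \cdots \otimes \bx$ for every $\tau \in \Pi_k$. Since $P_\tau$ is a permutation matrix and therefore orthogonal with $P_\tau^\top = P_{\tau^{-1}}$, I would then compute
\[
    \tau(\bw)^\top(\bx \otimes \cdots \otimes \bx) = \bw^\top P_{\tau^{-1}}(\bx \otimes \cdots \otimes \bx) = \bw^\top(\bx \otimes \cdots \otimes \bx),
\]
which is exactly the claimed equality of homogeneous polynomials.

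The only delicate ingredient will be the bookkeeping in the commutation step: one must track carefully how each term of the sum defining $\cL_k(\bA)$ is reshuffled by $P_\tau$, recognizing that the sum over positions is itself invariant under reindexing the mode labels. I expect this mode-tracking to be the main, though essentially mechanical, obstacle; the remainder of the proof then follows cleanly from the symmetry of $\bx \otimes \cdots \otimes \bx$ and the orthogonality of $P_\tau$.
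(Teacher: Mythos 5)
Your proof is correct and takes essentially the same route as the paper's: both hinge on verifying the commutation of the permutation with $\cL_k(\bA)$ on rank-one (pure) tensors and extending by linearity, then applying it to the given linear system. Your formalization of $\tau$ as a linear permutation matrix $P_\tau$ (which makes $\tau(\bw)$ manifestly independent of the chosen CP decomposition) and your use of the symmetry of $\bx \otimes \cdots \otimes \bx$ together with $P_\tau^\top = P_{\tau^{-1}}$ for the second claim are minor, equally valid variants of the paper's direct computation with the scalar factors $\bx^\top \bf_{i}^{(j)}$.
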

\begin{proof}
    Consider the CP decomposition
    \begin{equation*}
        \bw = \sum_{j = 1}^R \bf_{1}^{(j)} \otimes \cdots \otimes \bf_{k}^{(j)}.
    \end{equation*}
    First, for some $j \in \left\{ 1,\ldots,R \right\}$ and $\tau \in \Pi_k$, consider the expression
    \begin{equation*}
    \begin{aligned}
        \tau & \left(  \cL_k(\bA) \left( \bf_{1}^{(j)} \otimes \cdots \otimes \bf_{k}^{(j)} \right) \right) \\
        &= \tau \bigg( \left( \bA \bf_{1}^{(j)} \right) \otimes \bf_{2}^{(j)} \otimes \cdots \otimes \bf_{k}^{(j)} + \ldots \\
        &\qquad \qquad + \bf_{1}^{(j)} \otimes \cdots \otimes \bf_{k-1}^{(j)} \otimes \left( \bA \bf_{k}^{(j)} \right) \bigg).
    \end{aligned}
    \end{equation*}
    Applying the permutation $\tau$ to each term yields
    \begin{equation*}
    \begin{aligned}
        \left( \bA \bf_{\tau(1)}^{(j)} \right)& \otimes \bf_{\tau(2)}^{(j)} \otimes \cdots \otimes \bf_{\tau(k)}^{(j)} + \ldots \\
        & \quad + \bf_{\tau(1)}^{(j)} \otimes \cdots \otimes \bf_{\tau(k-1)}^{(j)} \otimes \left( \bA \bf_{\tau(k)}^{(j)} \right) \\
        &= \cL_k(\bA) \, \tau\left(\bf_{1}^{(j)} \otimes \cdots \otimes \bf_{k}^{(j)} \right).
    \end{aligned}
    \end{equation*}
    In summary, we have
    \begin{equation}
        \label{eq:rank1perm}
        \begin{aligned}
            \tau & \left(  \cL_k(\bA) \left( \bf_{1}^{(j)} \otimes \cdots \otimes \bf_{k}^{(j)} \right) \right) \\
            &\qquad \quad = \cL_k(\bA) \, \tau\left(\bf_{1}^{(j)} \otimes \cdots \otimes \bf_{k}^{(j)} \right).
        \end{aligned}
    \end{equation}
    Next, we apply the permutation $\tau$ to both sides of the linear system $\cL_k(\bA)\bw = \bb$:
    \begin{equation*}
         \tau\left(\bb \right) = \tau \left( \cL_k(\bA) \sum_{j = 1}^R \bf_{1}^{(j)} \otimes \cdots \otimes \bf_{k}^{(j)} \right).
    \end{equation*}
    Applying the equation derived in \eqref{eq:rank1perm} we obtain
    \begin{equation*}
        \begin{aligned}
            & \cL_k(\bA) \sum_{j = 1}^R  \tau \left( \bf_{1}^{(j)} \otimes \cdots \otimes \bf_{k}^{(j)} \right) = \cL_k(\bA) \tau \left( \bw \right).
        \end{aligned}
    \end{equation*}
    Finally, it is easily verified that
    \begin{align*}
        \bw^\top (\bx \otimes\cdots \otimes \bx) &= \sum_{j = 1}^R \left( \bx^\top \bf_{1}^{(j)} \right) \cdots \left(\bx^\top \bf_{k}^{(j)} \right) \\
        &= \sum_{j = 1}^R \left( \bx^\top \bf_{\tau(1)}^{(j)} \right) \cdots \left(\bx^\top \bf_{\tau(k)}^{(j)} \right) \\
        &= \tau(\bw)^\top (\bx \otimes\cdots \otimes \bx).
    \end{align*}
\end{proof}
In the context of observability energy functions, Lemma~\ref{proposition:permutedsolutions} states that polynomial coefficients for $\coE$ can be obtained by solving a linear system with the right-hand side $\bC_k$ in \eqref{eq:rhs} or a right-hand side $\tau(\bC_k)$ for some $\tau \in \Pi_k$. In addition, we may even solve for the unique symmetric representation by considering $\Sym[\bC_k]$ as the right-hand side of the linear system. However, as pointed out in Section~\ref{sec:symmetrictensors}, symmetrization increases the size of the CP decomposition by $k!$ terms. Thus, we would like to compute polynomial coefficients that have a low-rank representation and only symmetrize (or treat the symmetrization implictly) in proceeding computations. We note that symmetrization is cheap computationally as it only requires rearranging data. A natural question is therefore: For which permutation can we expect the polynomial coefficients to have the lowest Kronecker rank? The answer to this question is summarized in the following lemma.
\begin{lemma}
    \label{lemma:minimalC}
    Consider the polynomial coefficients $\bc_1,\ldots,\bc_d$ of the LPO system in \eqref{eq:posys}. Assume there exist vectors $\bc_2,\ldots,\bc_d$ such that all of their Kronecker ranks are less than or equal to $R$. Let $k \in \mathbb{N}$. If $k = 2\kappa + 1$ is odd, define
    \begin{equation} \label{eq:Ckodd}
        \widetilde{\bC}_k :=
            2 \sum_{i=1}^{\kappa} \bc_i \otimes \bc_{k-i}.
    \end{equation}
    Otherwise, if $k = 2\kappa$ is even, define
    \begin{equation}  \label{eq:Ckeven}
        \widetilde{\bC}_k :=
            \bc_{\kappa} \otimes \bc_{\kappa} + 2 \sum_{i=1}^{\kappa-1} \bc_i \otimes \bc_{k-i}.
    \end{equation}
    The Kronecker rank of $\widetilde{\bC}_k$ is bounded by $(\kappa-1)R^2 + R$ for $k>2$ and equal to $1$ for $k=2$. In general, there is no permutation $\tau \in \Pi_k$ such that $\tau(\tbC_k)$ has a lower bound for the Kronecker rank than the one for $\tbC_k$.
\end{lemma}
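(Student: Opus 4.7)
The statement splits cleanly into two claims: (i) the Kronecker-rank upper bound on $\tbC_k$, and (ii) the optimality statement about permutations. For (i) I would lean on two elementary properties of the Kronecker rank: sub-additivity, $\krank(\bu + \bv) \le \krank(\bu) + \krank(\bv)$, and the product rule $\krank(\bu \otimes \bv) \le \krank(\bu)\,\krank(\bv)$. The hypothesis gives $\krank(\bc_i) \le R$ for $i \ge 2$, whereas $\bc_1 \in \R^n$ is a single vector and hence trivially has Kronecker rank $1$.

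With these tools in hand, the case $k = 2$ is immediate since $\tbC_2 = \bc_1 \otimes \bc_1$ is a Kronecker product of two Kronecker-rank-$1$ vectors. For odd $k = 2\kappa + 1 > 2$, I would isolate the summand $\bc_1 \otimes \bc_{k-1}$ in \eqref{eq:Ckodd} as contributing at most $1 \cdot R = R$ to the Kronecker rank, and bound each of the remaining $\kappa - 1$ summands $\bc_i \otimes \bc_{k-i}$ (for $2 \le i \le \kappa$) by $R \cdot R = R^2$, summing via sub-additivity to $(\kappa - 1) R^2 + R$. The even case $k = 2\kappa > 2$ is treated analogously, except that the diagonal contribution $\bc_\kappa \otimes \bc_\kappa$ in \eqref{eq:Ckeven} accounts for one of the $R^2$ contributions, the $\bc_1 \otimes \bc_{k-1}$ term still accounts for $R$, and the remaining $\kappa - 2$ off-diagonal terms each contribute at most $R^2$, once again yielding $(\kappa - 1) R^2 + R$.

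For the second claim, I would invoke the standard fact that permuting the modes of a tensor preserves its CP rank: applying $\tau \in \Pi_k$ to each rank-one summand of a CP decomposition simply reorders the factors and yields another CP decomposition with the same number of terms. Consequently, the summand-by-summand upper bound produced for $\tau(\tbC_k)$ coincides with the bound for $\tbC_k$, so no choice of $\tau$ can sharpen $(\kappa - 1) R^2 + R$. I expect the main obstacle to be the bookkeeping when going from $k$ odd to $k$ even, together with the need to keep $\bc_1$ (whose Kronecker rank is structurally $1$, not $R$) separate from the higher-order coefficients and to verify that the middle term $\bc_\kappa \otimes \bc_\kappa$ in the even case does not yield a duplicate or miscount; once those edge cases are handled, the argument reduces to careful counting.
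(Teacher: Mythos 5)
Your proof is correct and follows essentially the same route as the paper's: both bound $\krank(\tbC_k)$ by distributing the CP decompositions over the Kronecker products, treating the rank-one coefficient $\bc_1$ separately so that $\bc_1\otimes\bc_{k-1}$ contributes only $R$, and summing via subadditivity, with identical bookkeeping in the odd and even cases. For the permutation claim you invoke invariance of the Kronecker (CP) rank under mode permutations, which is a slightly more direct justification than the paper's observation that the representation of $\tbC_k$ is already minimal because each product $\bc_i\otimes\bc_{k-i}$ appears only once and a permutation can merely reorder factors; both arguments lead to the same conclusion.
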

\textcolor{black}{
\begin{proof}
    Let the rank-$R$ CP decomposition for $\bc_i$ and $\bc_{k-i}$ and $i=2,\ldots,\kappa$ be given by
    \begin{align*}
        \bc_i &= \sum_{j=1}^R \bc_{i,1}^{(j)} \otimes \cdots \otimes \bc_{i,i}^{(j)}, \\
        \bc_{k-i} &= \sum_{j=1}^R \bc_{k-i,1}^{(j)} \otimes \cdots \otimes \bc_{k-i,k-i}^{(j)}.
    \end{align*}
    We then write
    \begin{equation*}
        \bc_i \otimes \bc_{k-i} = \sum_{j=1}^R \sum_{m=1}^R \bc_{i,1}^{(j)} \otimes \cdots \otimes \bc_{i,i}^{(j)} \otimes  \bc_{k-i,1}^{(m)} \otimes \cdots \otimes \bc_{k-i,k-i}^{(m)},
    \end{equation*}
    which yields $\krank(\bc_i \otimes \bc_{k-i}) \leq R^2$. For the special case of $i=1$, we have 
    \begin{equation*}
        \bc_1 \otimes \bc_{k-1} = \sum_{j=1}^R \bc_1 \otimes \bc_{k-1,1}^{(j)} \otimes \cdots \otimes \bc_{k-1,k-1}^{(j)}
    \end{equation*}
    and therefore $\krank(\bc_1 \otimes \bc_{k-1}) \leq R$. Combining both of the prior observations yields the proposed bound $\krank(\tbC_k) \leq (\kappa - 1)R^2 + R$ for odd and even values of $k$. The fact that there is no permutation $\tau(\tbC_k)$ which yields a lower bound follows from the fact that the representation for $\tbC_k$ is minimal in the sense that each Kronecker product of the form $\bc_i \otimes \bc_{k-i}$ or $\bc_{k-i} \otimes \bc_i$ only appears once in the sum. Since any permutation $\tau$ can only change the order of the Kronecker products, we cannot reduce the introduced representation to less terms and therefore cannot obtain a generic lower bound  smaller than the proposed one.
\end{proof}}
With this result in hand we summarize our proposed computational procedure for low-rank approximation of the observability energy function coefficients in Algorithm~\ref{alg:quadrature}.

\begin{algorithm}[H]
	\caption{Low-Rank Approximation of Observability Energy Function}
    \label{alg:quadrature}
	\begin{algorithmic}[1]
		\Require $\bA \in \mathbb{R}^{n \times n}$, CP decompositions for $\bc_1, \ldots, \bc_d$, tolerance $\varepsilon > 0$
		\Ensure CP decompositions for $\bw_{2},\ldots,\bw_{2d}$
		\For{$k = 2,\ldots,2d$}
			\State Assemble right-hand side $\widetilde{\bC}_k$ as in~\eqref{eq:Ckodd} or 
   ~\eqref{eq:Ckeven}
			\State Determine $\ell$ based on the priori error bound \eqref{eq:quadratureerror} for the quadrature approximation
			\For{$i = -\ell,\ldots,\ell$}
				\State $\bw_k^{(i)} = \omega_i \big(  \exp(\alpha_i \bA^\top) \otimes \cdots \otimes  \exp(\alpha_i \bA^\top) \big) \widetilde{\bC}_k$
			\EndFor
			\State Add quadrature terms $\bw_{k} = \bw_k^{(-\ell)} + \cdots + \bw_k^{(\ell)}$
		\EndFor
	\end{algorithmic}
\end{algorithm}
Algorithm~\ref{alg:quadrature} approximates the coefficients of the observability energy function
\begin{equation*}
    \coE(\bx) = \frac{1}{2} \sum_{j=2}^{2d} \bw_j^\top (\bx \otimes \cdots \otimes \bx)
\end{equation*}
via vectors in a storage-efficient Kronecker product representation. As pointed out earlier, it is crucial to use such low-rank approximations in a large-scale setting. In particular, we use the proposed computational procedure in Section~\ref{sec:convdiff} and emphasize that storing the energy function coefficients would not be feasible without relying on low-rank decompositions in that case.

\section{Energy-Based MOR}
\label{sec:energybasedMOR}
In this section we discuss how the energy functions in \eqref{eq:controllabilityenergy} and \eqref{eq:observabilityenergy} can be used in the context of MOR and introduce an optimization based perspective on BT. 
\subsection{Balanced Truncation}
\label{sec:linearbt}
We begin by reviewing the BT framework for linear dynamical systems \cite{Moore81}. Consider the LTI system
\begin{equation}
    \begin{aligned}
        \label{eq:LTI}
        \dot{\bx}(t) &=\bA \bx(t) + \bB \bu(t) \\
        y(t) &= \bc^\top \bx(t).
    \end{aligned}
\end{equation}
The controllability and observability energy functions of~\eqref{eq:LTI} are quadratic polynomials given by 
\begin{equation} \label{eq:EcEoLTI}    \ccE(\bx) = \frac{1}{2} \bx^\top \cP^{-1} \bx \quad \text{and} \quad
    \coE(\bx) = \frac{1}{2} \bx^\top \cW_2 \bx.
\end{equation}
In~\eqref{eq:EcEoLTI},
 the controllability gramian $\cP$ solves the Lyapunov equation~\eqref{eq:controllyap} and the observability gramian $\cW_2$ solves the dual Lyapunov equation
\begin{equation*}
    \bA^\top \cW_2 + \cW_2 \bA + \bc\bc^\top = 0.
\end{equation*}
Equivalently, solving the linear system in \eqref{eq:kroneckersystem} yields the observability Gramian via $\vec(\cW_2) = \bw_2$. BT is based on computing a balancing transformation $\bT \in \mathbb{R}^{n \times n}$ that simultaneously diagonalizes the matrices $\cP$ and $\cW_2$. More precisely, the balancing transformation satisfies 
\begin{equation}
    \label{eq:balancingtransformation}
    \bT \cP \bT^{\top} = \bT^{-\top} \cW_2 \bT^{-1} = \Sigma = \operatorname{diag}(\sigma_1,\ldots,\sigma_n).
\end{equation}
The diagonal entries $\sigma_1,\ldots,\sigma_n$ are the Hankel singular values of the system which quantify contributions of each state variable to the input-output behaviour of the system \cite{Antoulas05}. We use $\bT$ to obtain a balanced representation of the system in \eqref{eq:LTI} given by
\begin{equation}
\label{eq:balanced}
\begin{aligned}
    \dot{\tbx}(t) &= \bT \bA \bT^{-1} \tbx(t) + \bT \bB \bu(t) \\
    y(t) &= \bc^\top \bT^{-1} \tbx(t).
\end{aligned}
\end{equation}
Note that the state-space transformation preserves the input-output behavior of the system. However, the energy functions of the transformed (balanced) system  become
\begin{equation*}
    \tccE(\tbx) = \frac{1}{2} \tbx^\top \Sigma^{-1} \tbx \quad \text{and} \quad
    \tcoE(\tbx) = \frac{1}{2} \tbx^\top \Sigma \tbx.
\end{equation*}
This reveals that in the balanced representation of the system the state components are in fact sorted based on their contribution to the energy functions. The idea of BT is to remove parts of the state vector $\tbx$ which correspond to state components that are difficult to control and simultaneously contribute little to the ouput of the system. Typically, MOR of LTI systems is performed by choosing MOR matrices $\bV \in \mathbb{R}^{n \times r}$ and $\bW \in \mathbb{R}^{n \times r}$ with $r \ll n$ and then construcing the ROM by
\begin{equation}
    \begin{aligned}
    \label{eq:ROM}
    \dot{\hbx}(t) &=\hbA \hbx(t) + \hbB \bu(t) \\
    \hy(t) &= \hbc^\top \hbx(t)
\end{aligned}
\end{equation}
with a low-dimensional state $\hbx(t) \in \mathbb{R}^r$ via
\begin{equation} \label{eq:LinProj}
    \hbA = \bW^\top \bA \bV, \quad \hbB = \bW^\top \bB \quad \text{and} \quad \hbc = \bV^\top \bc.
\end{equation}
In~\eqref{eq:LinProj}, without of loss of generality, we assumed that  $\bW^\top \bV = \bI_r$.
In the BT framework with the balanced system~\eqref{eq:balanced}, this corresponds to choosing specific matrices
\begin{equation*}
    \bV = \bT^{-1}(:,1:r) \quad \text{and} \quad \bW^\top = \bT(1:r,:),
\end{equation*}
{where $\bT(1:r,:)$ denotes the matrix formed by the leading $r$ rows of $\bT$ and $\bT^{-1}(:,1:r)$ denotes the matrix formed by the leading $r$ columns of $\bT^{-1}$.} Hence, we truncate $n-r$ state variables of the balanced system representation.
The key properties which make this method appealing are efficient algorithms for computing $\bV$ and $\bW$ as well as an a priori error bound for the input-output error in terms of the Hankel singular values \cite{Antoulas05}. We emphasize that in practice BT is performed \emph{without} forming the fully balanced system~\eqref{eq:balanced} as it is numerically ill-conditioned~\cite{Antoulas05}. Here we present the balance-then-truncate framework to illustrate the theoretical formulation. 
\subsection{An Optimization Perspective on Balanced Truncation}
\label{sec:optimizationperspective}
While simultaneous diagonalization of two matrices is a plausible perspective on BT in the linear case, it does not readily extend to the general nonlinear setting. Instead, diagonal forms of the energy functions are considered in \cite{Scherpen93,Fujimoto10,Kramer23}. These diagonal forms are based on a nonlinear state-space transformation $\Phi$ which satisfies 
\begin{align*}
    \ccE(\Phi(x)) &= \frac{1}{2} \sum_{i=1}^n \frac{x_i^2}{\sigma_i(x_i)}, \\
    \coE(\Phi(x)) &= \frac{1}{2} \sum_{i=1}^n \sigma_i(x_i) x_i^2.
\end{align*}
The state-dependent singular value function $\sigma_i:\R \rightarrow \R$ is used to quantify the importance of the state component $x_i$ for the input-output behavior. This allows for performing MOR by truncating $x_i$ that correspond to (in some appropriate sense) small $\sigma_i$, similarly to the linear BT framework. Unlike in the linear case, however, computing the transformation $\Phi$ accurately is difficult in practice. Additionally, performing a nonlinear state-space transformation does not preserve an underlying nonlinear structure in the model. In general, an LPO system would be transformed into a system with a nonlinear state equation which may make it difficult to interpret and analyze the ROM.
{
A different balancing-based approach for LTI \emph{quadratic output} systems has been introduced in \cite{Benner22}. There the balancing transformation is obtained by simultaneous diagonalization of $\cP$ and a matrix $\cQ$ which satisfies
\begin{equation} \label{eq:BennerQ}
    \bA^\top \cQ + \cQ \bA + \bC_2^\top \cP \bC_2 = 0, 
\end{equation}
where  $\bC_2\in \R^{n \times n}$  defines the quadratic output of the underlying system given by $y(t) = \bx^\top \bC_2 \bx$. The matrix $\cQ$ is related to the observability energy function via
\begin{equation*}
    \coE(\bx) \leq (\bx^\top \cQ \bx) (\bx^\top \cP^{-1} \bx).
\end{equation*}
The rest of the algorithm follows as in regular BT for LTI systems.
While this approach has favorable properties such as an $\cH_2$ error bound, it does not directly take into account the true energy function $\coE$ like the linear and nonlinear BT frameworks do. Therefore, an MOR approach which combines structure preservation (i.e., the ROM is still an  LPO system ) and reduction based on the true energy functions of the underlying system is  still missing.} Here, our goal is to introduce a MOR approach which aims to close this gap. \\

Towards this goal, Lemma~\ref{lemma:optimizationbt} gives an equivalent formulation of BT for linear systems which readily extends to LPO systems. The key idea of this lemma is to view BT as a two-step procedure: First, we consider the Cholseky factorization of $\cP = \bR \bR^\top$ and perform a state
space transformation on~\eqref{eq:LTI} via $\bR$ resulting in the system
\begin{align*}
    \dot{\tbx}(t) &= \bR^{-1} \bA \bR \tbx(t) + \bR^{-1} \bB \bu(t) \\
    y(t) &= \bc^\top \bR \tbx(t).
\end{align*}
The corresponding energy functions of this system are 
\begin{equation}
    \label{eq:linearenergyfunctions}
    \tccE(\tbx) = \frac{1}{2} \tbx^\top \tbx \quad \text{and} \quad
    \tcoE(\tbx) = \frac{1}{2} \tbx^\top \tcW_2 \tbx,
\end{equation}
where $\tcW_2 = \bR^\top \cW_2 \bR$. 
In this representation all components of the state $\tbx$ are equally difficult to control. Hence, the system representation is called an input-normal form. In a second step we seek to find an $r$-dimensional subspace which contains states that maximally contribute to the observability energy in a specific domain. In particular, we look at 
\emph{the average observability energy} in an $n$-dimensional ball of radius $L$ which we denote as $\cB_n(L) := \{ \bx \in \mathbb{R}^n : \lVert \bx \rVert_2 \leq L \}$. In order to make this idea more clear let us assume an orthogonal matrix $\tbQ \in \mathbb{R}^{n \times r}$ is given. The average observability energy in $\cB_n(L)$ of states in $\operatorname{Range}(\tbQ)$ is then given by 
\begin{align*}
    F(\tbQ) :&= \frac{1}{V_n(L)} \int_{\bx \in \cB_n(L)} \tcoE(\tbQ \tbQ^\top \bx) \operatorname{d\bx}\\
    &= \frac{1}{V_n(L)} \int_{\bx \in \cB_n(L)} \bx^\top \tbQ \tbQ^\top \tcW_2 \tbQ \tbQ^\top \bx \operatorname{d\bx},
\end{align*}
where $V_n(L)$ is the volume of $\cB_n(L)$. As we will show in Lemma~\ref{lemma:energytraceformula},  this function can be written as
\begin{equation}
    \label{eq:linearFtrace}
    F(\tbQ) = \frac{L^2}{n+2} \operatorname{trace}(\tbQ^\top \tcW_2 \tbQ).
\end{equation}
Now, a basis for an $r$-dimensional subspace which maximizes the average observability energy in $\cB_n(L)$ can be obtained by solving the optimization problem
\begin{equation}
    \label{eq:linearoptimization}
    \max_{\substack{\tbQ \in \mathbb{R}^{n \times r} \\ \tbQ^\top\tbQ = I}} F(\tbQ).
\end{equation}
In the next lemma, we show that this formulation is indeed equivalent to BT in the case of LTI sytem.
\begin{lemma}
    \label{lemma:optimizationbt}
    Given the LTI system~\eqref{eq:LTI},
    compute the Cholesky factorization of the controllability gramian $\cP = \bR \bR^\top$ and 
    define $\tcW_2 = \bR^\top\cW_2 \bR$. Let
    \begin{equation*}
        \bQ = \argmax_{\substack{\tbQ \in \mathbb{R}^{n \times r} \\ \tbQ^\top\tbQ = I}} F(\tbQ).
    \end{equation*}
    Then the choice of MOR matrices 
    \begin{equation*}
    \bV = \bR \bQ \quad \text{and} \quad \bW^\top = \bQ^\top \bR^{-1}
    \end{equation*}
    in~\eqref{eq:ROM} and~\eqref{eq:LinProj} leads to a ROM which defines the same input-output mapping as the ROM obtained via BT, i.e., it leads to the same ROM obtained via BT.
\end{lemma}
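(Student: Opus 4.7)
The plan is to recognize the optimization in~\eqref{eq:linearoptimization} as a standard Rayleigh-type trace maximization whose solution is given by the leading eigenvectors of the symmetric positive semi-definite matrix $\tcW_2 = \bR^\top \cW_2 \bR$, and then to exhibit a particular square-root balancing transformation for which the BT projection matrices agree with the proposed $\bV = \bR\bQ$ and $\bW^\top = \bQ^\top \bR^{-1}$ up to a nonsingular change of coordinates on the reduced state. Since such coordinate changes preserve the transfer function, this yields the claimed input-output equivalence.

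First I would handle the optimization step. Discarding the positive constant $L^2/(n+2)$, the problem reduces to maximizing $\operatorname{trace}(\tbQ^\top \tcW_2 \tbQ)$ over orthogonal $\tbQ \in \R^{n \times r}$, which by the Ky Fan maximum principle (equivalently, a Lagrange multiplier / Rayleigh--Ritz argument) is attained by any orthonormal basis of the dominant $r$-dimensional invariant subspace of $\tcW_2$. Writing the eigendecomposition $\tcW_2 = \bQ_\star \Lambda \bQ_\star^\top$ with eigenvalues arranged in nonincreasing order on the diagonal of $\Lambda$, an optimal $\bQ$ is therefore $\bQ_\star(:,1{:}r)$. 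Because $\cP \cW_2 = \bR\bR^\top \cW_2 = \bR\tcW_2\bR^{-1}$, the matrices $\cP\cW_2$ and $\tcW_2$ are similar, so the eigenvalues of $\tcW_2$ are the squared Hankel singular values $\sigma_1^2,\ldots,\sigma_n^2$, and we may write $\Lambda = \Sigma^2$.

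Next I would construct an explicit balancing transformation from this eigendecomposition. Setting $\bT^{-1} := \bR\bQ_\star \Sigma^{-1/2}$ and $\bT := \Sigma^{1/2}\bQ_\star^\top \bR^{-1}$, a direct calculation using $\bQ_\star^\top \bQ_\star = \bI$ and $\bQ_\star^\top \tcW_2 \bQ_\star = \Sigma^2$ verifies both conditions in~\eqref{eq:balancingtransformation}. With this choice of $\bT$, the BT projection matrices read $\bV_{\mathrm{BT}} = \bT^{-1}(:,1{:}r) = \bR\bQ\,\Sigma_r^{-1/2}$ and $\bW_{\mathrm{BT}}^\top = \bT(1{:}r,:) = \Sigma_r^{1/2}\bQ^\top \bR^{-1}$, where $\Sigma_r$ is the leading $r\times r$ diagonal block of $\Sigma$.

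Finally I would compare the two ROMs. The proposed pair satisfies $\bW^\top \bV = \bQ^\top \bR^{-1}\bR\bQ = \bI_r$, and the relations $\bV = \bV_{\mathrm{BT}}\,\Sigma_r^{1/2}$ and $\bW^\top = \Sigma_r^{-1/2}\bW_{\mathrm{BT}}^\top$ exhibit the proposed MOR bases as a right multiplication of the BT bases by the invertible diagonal matrix $\Sigma_r^{1/2}$ and its inverse. This corresponds to the invertible state-space transformation $\hbA \mapsto \Sigma_r^{-1/2}\hbA\Sigma_r^{1/2}$, $\hbB \mapsto \Sigma_r^{-1/2}\hbB$, $\hbc \mapsto \Sigma_r^{1/2}\hbc$ applied to the BT reduced model, which leaves the transfer function invariant. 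The main subtlety I anticipate is the nonuniqueness of $\bT$ coming from possible multiplicities among the $\sigma_i$ and from the orthogonal/sign freedom in the eigenvectors of $\tcW_2$; any such ambiguity likewise corresponds to a further orthogonal state-space transformation of the ROM and therefore does not affect the input-output mapping.
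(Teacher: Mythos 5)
Your proof is correct and follows essentially the same route as the paper: identify the optimizer as the dominant eigenvectors of $\tcW_2$ via the trace/Ky Fan argument, relate $\bR\bQ$ and $\bQ^\top\bR^{-1}$ to the BT projection matrices through the square-root balancing transformation $\bT = \Sigma^{1/2}\bQ_\star^\top\bR^{-1}$, and observe that the residual $\Sigma_r^{\pm 1/2}$ scaling is a reduced-state coordinate change that leaves the transfer function invariant. Your explicit verification of the balancing conditions and your remark on the eigenvector nonuniqueness under repeated Hankel singular values are welcome details that the paper only asserts implicitly.
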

\begin{proof}
    First, recall that the balancing transformation in \eqref{eq:balancingtransformation} can be written as 
    \begin{equation*}
        \bT = \Sigma^{\frac{1}{2}} \bU^\top \bR^{-1}
    \end{equation*}
    where $\bU$ corresponds to the singular vectors of the symmetric positive definite observability energy coefficient matrix $\tcW_2$ of the input-normal energy function in \eqref{eq:linearenergyfunctions}.
    Next, we note that the leading $r$ singular vectors of the matrix $\tcW_2$, i.e., $\bQ = \bU(:,1:r)$,
solves
    the optimization problem proposed in \eqref{eq:linearoptimization}.   This can be easily verified via the trace formula for $F$ specified in \eqref{eq:linearFtrace} \cite{Kokiopoulou2011}. In particular, $\bQ$ is the optimum for any choice of $L$. This means that
    \begin{align*}
        \bV &= \bR \bQ = \bT^{-1}(:,1:r)\Sigma^{-\frac{1}{2}}(1:r,1:r)~~\text{and} \\
        \bW^\top &= \bQ^\top \bR^{-1} = \Sigma^{\frac{1}{2}}(1:r,1:r) \bT(1:r,:).
    \end{align*}
    The projection matrices $\bV$ and $\bW$ are therefore the same as in the BT framework, except that they are multiplied with the diagonal scaling matrices $\Sigma^{-\frac{1}{2}}(1:r,1:r)$ and $\Sigma^{\frac{1}{2}}(1:r,1:r)$, respectively. Hence, the ROMs are equivalent up to a state-space transformation via $\Sigma^{-\frac{1}{2}}(1:r,1:r)$, which means that they define the same input-output mapping.
\end{proof}

\section{Energy-Based Reduction of LPO Systems}
\label{sec:energybasedapproximation}
In this section, we show that the novel energy-based formulation for BT outlined in Lemma \ref{lemma:optimizationbt} readily extends to LPO systems. As in the linear case, we first bring the system in \eqref{eq:posys} into an input-normal form
\begin{equation}
\label{eq:inputnormalltipo}
\begin{aligned}
    \dot{\tbx}(t) &=\bR^{-1} \bA \bR \tbx(t) + \bR^{-1} \bB u(t) \\
    y(t) &= \bc_1^\top \bR \tbx(t) + \cdots + \bc_d^\top \left(\bR \tbx(t) \otimes \cdots \otimes \bR \tbx(t) \right).
\end{aligned}
\end{equation}
This form can easily be computed by determining the Cholesky factorization $\cP = \bR \bR^\top$ and performing tensor-matrix multiplications with the polynomial output coefficients $\bc_k$ and the Cholesky factor $\bR$. The corresponding energy functions in this transformed basis are given by
\begin{align}
    \tccE(\tbx) &= \frac{1}{2} \tbx^\top \tbx, \notag \\
    \tcoE(\tbx) &= \frac{1}{2} \sum_{j=2}^{2d} \tbw_j^\top (\tbx \otimes \cdots \otimes \tbx), \label{eq:inputnormalobsenergy}
\end{align}
where
\begin{equation*}
    \tbw_j = (\bR^\top \otimes \cdots \otimes \bR^\top)\bw_j \quad \text{for} \quad j=1,\ldots,2d.
\end{equation*}
Based on Proposition~\ref{proposition:permutedsolutions}, without loss of generality we assume that $\bw_j$ and therefore $\tbw_j$ are symmetric. Next, we would like to find a matrix $\bQ$ such that $\operatorname{Range}(\bQ)$ represents a subspace of states which maximize the average observability energy in $\cB_n(L)$. Based on such a $\bQ$ we can define $\bV = \bR \bQ$, $\bW^\top = \bQ^\top \bR^{-1}$ and the ROM
\begin{equation}
\label{eq:ltiporom}
\begin{aligned}
    \dot{\hbx}(t) &=\hbA \hbx(t) + \hbB u(t) \\
    \hy(t) &= \hbc_1^\top \hbx(t) + \cdots + \hbc_d^\top \left(\hbx(t) \otimes \cdots \otimes \hbx(t) \right),
\end{aligned}
\end{equation}
where $\hbA = \bW^\top \bA \bV$, $\hbB = \bW^\top \bB$ and $\hbc_k = (\bV^\top \otimes \cdots \otimes \bV^\top) \bc_k$. 

In order to compute the requried $\bQ$ matrix, we consider the optimization problem
\begin{equation}
    \label{eq:optimizationproblem}
     \max_{\substack{\tbQ \in \mathbb{R}^{n \times r} \\ \tbQ^\top\tbQ = I}} F(\tbQ)
\end{equation}
where
\begin{equation}
    \label{eq:objectivefunction}
    F(\tbQ) := \frac{1}{V_n(L)} \int_{\bx \in \cB_n(L)} \tcoE(\tbQ \tbQ^\top \bx) \operatorname{d\bx}.
\end{equation}
The following result gives an explicit representation of $F$ in terms of traces of square matrices associated with the polynomial coefficients $\tbw_k$.
\begin{lemma}
    \label{lemma:energytraceformula}
    Consider the observability energy function $\tcoE$ in \eqref{eq:inputnormalobsenergy} of the input-normal system in \eqref{eq:inputnormalltipo}. Then 
the average observability energy of states in $\operatorname{Range}(\tbQ)$ in the ball $\cB_n(L)$ is given by
    \begin{align*}
        F(\tbQ) &:= \frac{1}{V_n(L)} \int_{\bx \in \cB_n(L)} \tcoE(\tbQ \tbQ^\top \bx) \operatorname{d\bx} \\
        &= \sum_{\kappa=1}^d c_{\kappa}(n,L) F_{2\kappa}(\tbQ),
    \end{align*}
    where 
    \begin{equation} \label{eq:F2k}
        F_{2\kappa}(\tbQ) = \operatorname{tr}((\tbQ \otimes \cdots \otimes \tbQ)^\top\tbW_{2\kappa} (\tbQ \otimes \cdots \otimes \tbQ))
    \end{equation}
    and
    \begin{equation} \label{eq:ck}
        c_\kappa(n,L) :=  L^{2\kappa} \frac{(2\kappa-1)!! n!!}{(n+2\kappa)!!}.
    \end{equation}
    In~\eqref{eq:F2k}, $\tbW_{2\kappa} \in \mathbb{R}^{n^\kappa \times n^\kappa}$ is the canonical square matricization of the symmetric polynomial coefficient tensor $\tbw_{2\kappa}$ 
    and in~\eqref{eq:ck}, $n!! = 1 \times 3 \times \cdots \times n$ if $n$ is odd and $n!! = 2 \times 4 \times \cdots \times n$ if $n$ is even.
\end{lemma}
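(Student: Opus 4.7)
The plan is to substitute the polynomial representation~\eqref{eq:inputnormalobsenergy} of $\tcoE$ into the integral defining $F(\tbQ)$, eliminate odd-degree contributions by a parity argument, and then evaluate each surviving even-degree term via moments of the uniform distribution on $\cB_n(L)$. Writing $\by = \tbQ\tbQ^\top \bx$ and using $\by^{\otimes j} = (\tbQ\tbQ^\top)^{\otimes j}\bx^{\otimes j}$, each term in $\tcoE(\by)$ takes the form $\tbw_j^\top (\tbQ\tbQ^\top)^{\otimes j} \bx^{\otimes j}$. Since $\cB_n(L)$ is invariant under $\bx \mapsto -\bx$, every odd-degree monomial in $\bx$ integrates to zero, so only the terms with $j = 2\kappa$ for $\kappa = 1,\ldots,d$ contribute.

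The next step is to evaluate $(V_n(L))^{-1}\int_{\cB_n(L)} \bx^{\otimes 2\kappa}\,d\bx$ using polar coordinates $\bx = r\boldsymbol{\omega}$. The radial part yields a factor proportional to $L^{2\kappa}/(n+2\kappa)$, while the angular part is the classical spherical moment
\begin{equation*}
\int_{S^{n-1}} \omega_{i_1}\cdots\omega_{i_{2\kappa}}\,d\omega \;=\; \frac{|S^{n-1}|}{n(n+2)\cdots(n+2\kappa-2)}\sum_{\pi \in P_{2\kappa}} \prod_{\{a,b\}\in \pi} \delta_{i_a i_b},
\end{equation*}
where $P_{2\kappa}$ denotes the set of $(2\kappa-1)!!$ perfect matchings of $\{1,\ldots,2\kappa\}$. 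Normalizing by $V_n(L) = L^n |S^{n-1}|/n$ and using the identity $(n+2)(n+4)\cdots(n+2\kappa) = (n+2\kappa)!!/n!!$ collects the overall prefactor $L^{2\kappa} n!!/(n+2\kappa)!!$ that, together with the $(2\kappa-1)!!$ from the pairings, reproduces $c_\kappa(n,L)$.

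The heart of the argument is to combine the symmetry of $\tbw_{2\kappa}$, which can be assumed without loss of generality by Lemma~\ref{proposition:permutedsolutions}, with the pairing structure of the moment tensor. Because $\tbw_{2\kappa}$ is invariant under every permutation of its $2\kappa$ index slots, the contraction $\tbw_{2\kappa}^\top\bigl(\prod_{\{a,b\}\in\pi}\delta_{i_a i_b}\bigr)$ is the same for every pairing $\pi \in P_{2\kappa}$, so the sum produces exactly $(2\kappa-1)!!$ identical copies of a single canonical contraction. Choosing the pairing that matches slot $k$ with slot $k+\kappa$ for $k = 1,\ldots,\kappa$, this canonical contraction equals $\operatorname{tr}(\tbW_{2\kappa})$ by the very definition of the canonical matricization. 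Incorporating the factor $(\tbQ\tbQ^\top)^{\otimes 2\kappa}$ corresponds at the matricization level to replacing $\tbW_{2\kappa}$ with $(\tbQ\tbQ^\top)^{\otimes \kappa} \tbW_{2\kappa} (\tbQ\tbQ^\top)^{\otimes \kappa}$; the cyclic property of the trace together with $(\tbQ^{\otimes \kappa})^\top \tbQ^{\otimes \kappa} = \bI_{r^\kappa}$ then collapses this to $\operatorname{tr}((\tbQ^{\otimes \kappa})^\top \tbW_{2\kappa} \tbQ^{\otimes \kappa}) = F_{2\kappa}(\tbQ)$.

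The main obstacle is the careful bookkeeping required to confirm that the symmetry of $\tbw_{2\kappa}$ really does let every pairing in $P_{2\kappa}$ be replaced by a single canonical one whose contraction matches $\operatorname{tr}(\tbW_{2\kappa})$, and that the lifted Kronecker-product operations correspond precisely to the claimed matrix conjugation of $\tbW_{2\kappa}$. A useful sanity check to run after the general computation is the linear case $d = 1$, $\kappa = 1$, which must reproduce the formula~\eqref{eq:linearFtrace} from section~\ref{sec:optimizationperspective} and thus pins down the normalization of $c_1(n,L)$.
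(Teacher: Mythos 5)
Your argument is correct, and it reaches the result by a genuinely different route than the paper. The paper's proof (Appendix A) first invokes the existence of a \emph{symmetric CP decomposition} $\tbw_{2\kappa}=\sum_j f_j\otimes\cdots\otimes f_j$, which reduces each even term to a sum of powers of linear forms $\sum_j(f_j^\top\bx)^{2\kappa}$; it then applies Folland's ball-to-sphere reduction and a classical closed-form for $\int_{\cS_n}(f^\top\bx)^{2\kappa}\,\mathrm{d}\bs(\bx)$, and finally converts $\sum_j\lVert \tbQ^\top f_j\rVert^{2\kappa}$ into the trace of the matricization. You instead compute the full moment tensor $V_n(L)^{-1}\int_{\cB_n(L)}\bx^{\otimes 2\kappa}\,\mathrm{d}\bx$ via the Wick-type spherical moment formula over perfect matchings, and use the full symmetry of $\tbw_{2\kappa}$ to argue that all $(2\kappa-1)!!$ pairings contract to the single canonical value $\operatorname{tr}(\tbW_{2\kappa})$. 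Both routes hinge on the symmetry of $\tbw_{2\kappa}$, but in different places: the paper needs it to guarantee the symmetric CP decomposition exists (a nontrivial existence result that is never computed), whereas you need it only to equate the contractions over different pairings, which is an elementary relabeling of summation indices. Your approach is therefore somewhat more self-contained, at the cost of heavier index bookkeeping; the paper's is shorter once the two cited integral formulas are granted. Your handling of the $\tbQ\tbQ^\top$ conjugation (matricizing $(\tbQ\tbQ^\top)^{\otimes 2\kappa}\tbw_{2\kappa}$ and collapsing via cyclicity and $(\tbQ^{\otimes\kappa})^\top\tbQ^{\otimes\kappa}=\bI$) is sound, and your $\kappa=1$ sanity check correctly recovers $c_1(n,L)=L^2/(n+2)$ as in \eqref{eq:linearFtrace}.
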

The proof of Lemma~\ref{lemma:energytraceformula} can be found in \ref{sec:appendixA}. Note that the symmetric property of $\tbw_{2\kappa}$ is crucial to arrive at the stated trace formula for $F_{2\kappa}$. Additionally, if $\tbw_{2\kappa}$ is given in terms of a CP decomposition, the trace formula can be efficiently evaluated. Another noteworthy fact is that the odd polynomial terms $\tbw_3,\tbw_5,\ldots$ do not appear in our representation of $F(\tbQ)$. 

While the optimization problem in \eqref{eq:linearoptimization} has a closed-form solution for the LTI systems as in~\eqref{eq:LTI}, the closed-form solutions of \eqref{eq:optimizationproblem} in terms of specific Kronecker structured decompositions of $\tbw_k$ are not available for LPO systems. Instead, we propose using a gradient-based optimization technique in order to compute the optimal $\bQ$ of the optimization problem in \eqref{eq:optimizationproblem}. First, we point out that the feasible domain for our optimization problem is a compact Riemannian submanifold of $\mathbb{R}^{n \times r}$ called the Stiefel manifold
\begin{equation*}
    \operatorname{St}(n,r) = \{\bQ \in \mathbb{R}^{n\times r} : \bQ^\top \bQ = \bI\}.
\end{equation*}
Optimization on Riemannian manifolds has been studied rigorously \cite{Boumal2023} and various algorithms have been adapted to work on various Riemannian manifolds. For our optimization problem we use an adaption of the trust region method implemented in the Manopt Matlab toolbox \cite{Boumal2014}. An important prerequisite for using the optimizer is to have access to the gradient of the objective function $F(\tbQ)$. A formula for the gradient $\nabla F$ is given in Lemma~\ref{lemma:gradient}.
\begin{lemma}
    \label{lemma:gradient}
    Consider the function $F$ in \eqref{eq:objectivefunction}. Its gradient is given by
    \begin{equation*}
        \nabla F(\tbQ) = \sum_{\kappa=1}^d c_{\kappa}(n,L) \nabla F_{2 \kappa}(\tbQ),
    \end{equation*}
    where
    \begin{equation*}
        \nabla F_{2 \kappa}(\tbQ) = \sum_{\ell=1}^r \operatorname{mat}(\tbw_{2\kappa})(\tbQ \otimes \tbQ(:,\ell) \otimes \cdots \otimes \tbQ(:,\ell))
    \end{equation*}
    and $\operatorname{mat}(\tbw_{2\kappa}) \in \mathbb{R}^{n \times n^{2\kappa - 1}}$ refers to the mode-$1$ matricization of the underlying tensor \cite{Kolda09}.
\end{lemma}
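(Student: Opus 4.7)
The plan is to combine linearity of the gradient with the symmetry of $\tbw_{2\kappa}$. First, Lemma~\ref{lemma:energytraceformula} gives $F(\tbQ) = \sum_{\kappa=1}^d c_\kappa(n,L) F_{2\kappa}(\tbQ)$ with each coefficient $c_\kappa(n,L)$ independent of $\tbQ$, so by linearity of differentiation $\nabla F = \sum_{\kappa=1}^d c_\kappa(n,L) \nabla F_{2\kappa}$. It therefore suffices to establish the formula for $\nabla F_{2\kappa}$ individually.

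For a given $\kappa$, I would rewrite $F_{2\kappa}$ in a form convenient for differentiation. Using the trace identity $\operatorname{tr}(\bA^\top \bB \bA) = \langle \bB, \bA \bA^\top \rangle_F$ together with the mixed-product rule $(\tbQ^{\otimes \kappa})(\tbQ^{\otimes \kappa})^\top = (\tbQ \tbQ^\top)^{\otimes \kappa}$ yields
\begin{equation*}
F_{2\kappa}(\tbQ) = \langle \tbW_{2\kappa},\, \bP^{\otimes \kappa} \rangle_F, \qquad \bP := \tbQ\tbQ^\top,
\end{equation*}
exhibiting $F_{2\kappa}$ as a homogeneous polynomial of degree $\kappa$ in the symmetric matrix $\bP$, and hence of degree $2\kappa$ in the entries of $\tbQ$. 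Differentiating with respect to $\bP$ via the product rule produces $\kappa$ terms, one per factor of $\bP$. Because $\tbw_{2\kappa}$ is symmetric (as arranged in Section~\ref{sec:symmetrictensors} and assumed immediately before Lemma~\ref{lemma:energytraceformula}), these $\kappa$ terms coincide after the appropriate permutation of the $2\kappa$ modes, collapsing the derivative to $\kappa$ times a single representative partial contraction of $\tbW_{2\kappa}$ against $\bP^{\otimes(\kappa-1)}$. Applying the chain rule through $\bP = \tbQ\tbQ^\top$ then gives $\nabla F_{2\kappa}(\tbQ) = 2\bigl(\partial F_{2\kappa}/\partial \bP\bigr)\tbQ$, with $\partial F_{2\kappa}/\partial \bP$ symmetric.

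The last step is to recognize the remaining contraction as the claimed mode-1 matricization expression. Expanding
\begin{equation*}
\bP^{\otimes(\kappa-1)} = \sum_{\ell_1,\ldots,\ell_{\kappa-1}=1}^{r} \tbQ(:,\ell_1)\tbQ(:,\ell_1)^\top \otimes \cdots \otimes \tbQ(:,\ell_{\kappa-1})\tbQ(:,\ell_{\kappa-1})^\top
\end{equation*}
inside the contraction and using permutation invariance of $\tbw_{2\kappa}$ to redistribute the $2\kappa$ tensor slots into one free slot (producing the row dimension of the gradient), one slot absorbed into the columns of $\tbQ$, and $2\kappa - 2$ slots contracted against repeated copies of $\tbQ(:,\ell)$, produces the claimed Kronecker pattern $\tbQ \otimes \tbQ(:,\ell) \otimes \cdots \otimes \tbQ(:,\ell)$ acted on by $\operatorname{mat}(\tbw_{2\kappa})$ and the outer sum over $\ell$. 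I expect this matricization identification to be the main obstacle, since the expansion of $\bP^{\otimes(\kappa-1)}$ involves a multi-index sum over $(\ell_1,\ldots,\ell_{\kappa-1})$ and reducing it to the single sum over $\ell$ in the stated formula requires careful use of the full permutation invariance of $\tbw_{2\kappa}$ together with the proper choice of mode-1 matricization; once this combinatorial bookkeeping is in place, the formula follows directly.
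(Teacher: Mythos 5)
Your strategy --- writing $F_{2\kappa}(\tbQ)=\langle \tbW_{2\kappa},(\tbQ\tbQ^\top)^{\otimes\kappa}\rangle_F$, differentiating in $\mathbf{P}=\tbQ\tbQ^\top$, and applying the chain rule --- is sound and, via the symmetric CP decomposition $\tbw_{2\kappa}=\sum_{j}f_j\otimes\cdots\otimes f_j$, it yields
\begin{equation*}
  \nabla F_{2\kappa}(\tbQ)=2\kappa\sum_{j=1}^{R}\bigl(f_j^\top\tbQ\tbQ^\top f_j\bigr)^{\kappa-1}f_jf_j^\top\tbQ
  =2\kappa\sum_{j=1}^{R}\Bigl(\sum_{\ell=1}^{r}(f_j^\top q_\ell)^2\Bigr)^{\kappa-1}f_jf_j^\top\tbQ,
\end{equation*}
which is the correct gradient and agrees with the first line of the paper's Appendix B (the paper gets there more directly by writing $F_{2\kappa}(\tbQ)=\sum_j\lVert\tbQ^\top f_j\rVert^{2\kappa}$ and differentiating). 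The step you defer as ``combinatorial bookkeeping'' is, however, not a technicality you can push through. The stated formula evaluates to $\sum_{j}\sum_{\ell}(f_j^\top q_\ell)^{2\kappa-2}f_jf_j^\top\tbQ$, so matching it to the expression above would require
\begin{equation*}
  \Bigl(\sum_{\ell=1}^{r}(f_j^\top q_\ell)^2\Bigr)^{\kappa-1}=\sum_{\ell=1}^{r}(f_j^\top q_\ell)^{2\kappa-2}.
\end{equation*}
Permutation invariance of $\tbw_{2\kappa}$ only reorders tensor slots; it cannot produce the cross terms $\ell_1\neq\ell_2$ that appear when you expand $(\tbQ\tbQ^\top)^{\otimes(\kappa-1)}=\sum_{\ell_1,\ldots,\ell_{\kappa-1}}q_{\ell_1}q_{\ell_1}^\top\otimes\cdots\otimes q_{\ell_{\kappa-1}}q_{\ell_{\kappa-1}}^\top$ but are absent from the single sum over $\ell$. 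The two sides coincide only for $\kappa\le 2$ (and for $\kappa=1$ only up to the constant $r$ versus $2$); for $\kappa\ge 3$ and $r\ge 2$ they are genuinely different vectors, not merely rescalings.

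So the gap you flagged is real, and it is not closable: the correct conclusion of your argument is
\begin{equation*}
  \nabla F_{2\kappa}(\tbQ)=2\kappa\sum_{\ell_1,\ldots,\ell_{\kappa-1}=1}^{r}\operatorname{mat}(\tbw_{2\kappa})\bigl(\tbQ\otimes q_{\ell_1}\otimes q_{\ell_1}\otimes\cdots\otimes q_{\ell_{\kappa-1}}\otimes q_{\ell_{\kappa-1}}\bigr),
\end{equation*}
a sum over $(\kappa-1)$-tuples rather than a single index, with the factor $2\kappa$ retained (this factor also matters for $\nabla F$, since it differs across the terms of the weighted sum $\sum_\kappa c_\kappa\nabla F_{2\kappa}$). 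For comparison, the paper's own appendix reaches the same correct intermediate expression and then replaces $\lVert\tbQ^\top f_j\rVert^{2\kappa-2}$ by $\sum_{i}(f_j^\top q_i)^{2\kappa-2}$ in one line --- exactly the identification above --- so the mismatch you would encounter originates in the lemma's statement rather than in your approach. If you stop at the displayed multi-index formula, your proof is complete and arguably cleaner than the appendix, but it establishes a corrected version of the lemma (exact for $\kappa\le 2$, different for $\kappa\ge 3$) rather than the one printed.
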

We refer to \ref{sec:appendixB} for a proof of Lemma~\ref{lemma:gradient}. The gradient can be evaluated efficiently if $\tbw_{2\kappa}$ is given in terms of a CP decomposition. Further, we point out that we are considering a non-convex optimization problem in \eqref{eq:optimizationproblem}. This means that an optimization algorithm typically only converges to a local minimum and it is beneficial to pick a good initial guess for $\bQ$. Here we propose to pick the leading $r$ singular vectors of $\tcW_2$ as an initial guess for $\bQ$. This corresponds to the $\bQ$ used in the linear BT framework.\\

Now we have all the components of our energy-based  MOR procedure for LPO systems. We outline the resulting method in Algorithm~\ref{alg:energyMOR}.
\begin{algorithm}[H]
	\caption{Energy-Based Reduction of LPO System}
    \label{alg:energyMOR}
	\begin{algorithmic}[1]
		\Require LPO system as in \eqref{eq:posys}, $r\in\mathbb{N}$, $L > 0$
		\Ensure Reduced order LPO system as in \eqref{eq:ltiporom}
        \State Solve
        \begin{equation*}
            	0 = \bA \cP + \cP \bA^\top + \bB \bB^\top
        \end{equation*}
        for $\cP = \bR \bR^\top$
        \State Compute $\bw_2,\bw_4,\ldots$ and symmetrize
        \State Solve 
        \begin{equation*}
            \bQ = \argmax_{\substack{\tbQ \in \mathbb{R}^{n \times r} \\ \tbQ^\top \tbQ = I}} F(\tbQ)
        \end{equation*}
        \State Form $\bV = \bR \bQ$ and $\bW^\top = \bQ^\top \bR^{-1}$
        \State Compute the ROM as in \eqref{eq:ltiporom}
	\end{algorithmic}
\end{algorithm}
{While both nonlinear BT~\cite{Scherpen93} and the proposed method in Algorithm~\ref{alg:energyMOR} utilize the energy functions of an underlying dynamical system to compute a ROM, there are fundamental differences between the two approaches. Nonliear BT diagonalizes the energy functions in order to determine state components that are significant for the input-output mapping.  Algorithm~\ref{alg:energyMOR}, instead, avoids the diagonalization step and directly yields a ROM in which state components have a high contribution to the observability energy. In particular, our proposed approach does not yield singular value functions along the lines of nonlinear BT. It should be pointed out that the computation of these singular value functions is difficult in the large-scale setting and strategies for ordering them for the purpose of truncation-based MOR are still not fully resolved. 
Moreover, while nonlinear BT will lead to a fully nonlinear ROM, the proposed ROM will retain the LPO structure. Another aspect worth pointing out is that the nonlinear BT framework guarantees that the ROM is locally asymptotically stable. While all of our numerical experiments using Algorithm~\ref{alg:energyMOR} yield asymptotically stable ROMs, it remains an open question whether asymptotic stability of ROMs is guaranteed with our proposed method.}
\section{Numerical Examples}
\label{sec:numericalexamples}
In this section we apply our proposed MOR procedure to two benchmark problems.
\subsection{Mass-Spring-Damper System}
We begin by considering an LPO system 
\begin{equation*}
\begin{aligned}
    \dot{\bx}(t) &=\bA \bx(t) + \bB \bu(t), \\
    y(t) &= \bc_1^\top \bx(t) + \bc_2^\top \left(\bx(t) \otimes \bx(t) \right)
\end{aligned}
\end{equation*}
with linear and quadratic terms in the output. The model is a modified version of the mass-spring-damper system considered in \cite{Gugercin2012}. In particular, we consider here a modified output which combines the Hamiltonian of the system given by $\cH(\bx(t)) = \bc_2^\top \left(\bx(t) \otimes \bx(t) \right)$ and the first linear output $y_1(t) = \bc_1^\top\bx(t)$ as specified in \cite{Gugercin2012}. Further, we use $n=50$ as the dimension of our full order model and compute a ROM of dimension $r=10$ using Algorithm~\ref{alg:energyMOR} for various choices of $L$. In order to compare our method to an existing MOR approach we also compute a ROM of order $r=10$ using the BT variant for quadratic output systems introduced in \cite{Benner22}, which we call QOBT here. 
(Recall that QOBT uses  $\cP$ in~\eqref{eq:controllyap} and the modified observability Gramian 
$\cQ$  in~\eqref{eq:BennerQ}).
The system has $2$ inputs. To test the accuracy of ROMs, we choose  $u_1(t) = u_2(t) = \exp(-2t)\sin(\frac12 t)$ and simulate, for $t \in [0,20]$, the full order model (FOM) and the ROMs obtained by Algorithm~\ref{alg:energyMOR} as well as QOBT.
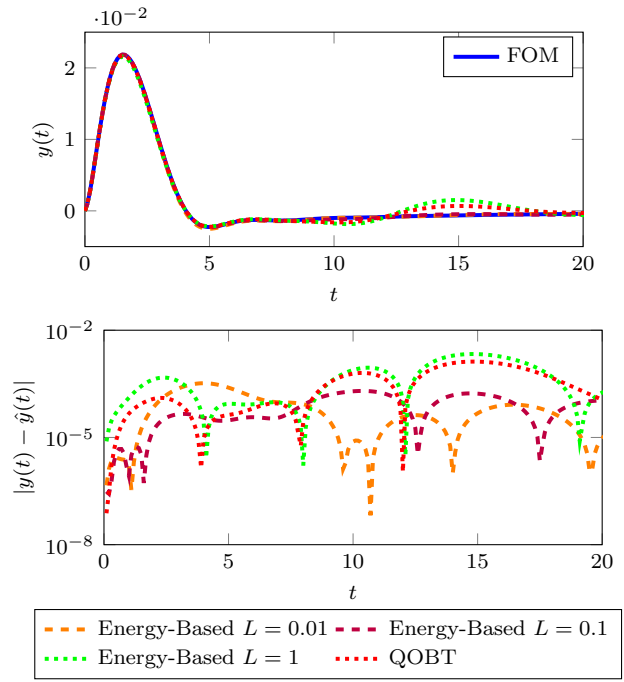
\begin{figure}
    \centering
\begin{tikzpicture}[font = \plotfontsize]
      \pgfplotstableread{figures/msd.dat}\tableINPUT
      \begin{axis}[%
        width  = .75\linewidth,
        height = .12\textheight,
        scale only axis,
        xmin = 0,
        xmax = 20,
        ymin = -5e-3,
        ymax = 2.5e-2,
        xminorticks = false,
        yminorticks = false,
        xlabel = {$t$},
        ylabel = {$y(t)$},
        ylabel style   = {yshift = -.3em},
        scaled x ticks = false,
        x tick label style = {/pgf/number format/1000 sep={\,}},
        y tick label style = {/pgf/number format/1000 sep={\,}},
        cycle list name    = plotlistmsd,
        legend style={anchor=north east},
        legend cell align={left}
      ]
    
        \foreach \y in {1, 2, 3, 4, 5}{
          \addplot+ table[x index = 0, y index = \y] {\tableINPUT};
        }
        \legend{FOM}
      \end{axis}
    \end{tikzpicture}
    \begin{tikzpicture}[font = \plotfontsize]
      \pgfplotstableread{figures/msd_error.dat}\tableINPUT
      \begin{axis}[%
        width  = .75\linewidth,
        height = .12\textheight,
        scale only axis,
        ymode = log,
        xmin = 0,
        xmax = 20,
        ymin = 1e-8,
        ymax = 1e-2,
        xminorticks = false,
        yminorticks = false,
        xlabel = {$t$},
        ylabel = {$|y(t) - \hy(t)|$},
        ylabel style   = {yshift = -.3em},
        scaled x ticks = false,
        x tick label style = {/pgf/number format/1000 sep={\,}},
        y tick label style = {/pgf/number format/1000 sep={\,}},
        cycle list name    = plotlistmsderror,
        legend style={legend columns=2,at={(0.45,-0.3)},anchor=north},
        legend cell align={left}
      ]
    
        \foreach \y in {1, 2, 3, 4}{
          \addplot+ table[x index = 0, y index = \y] {\tableINPUT};
        }
        \legend{Energy-Based $L=0.01$,Energy-Based $L=0.1$,Energy-Based $L=1$, QOBT}
      \end{axis}
    \end{tikzpicture}
    \caption{Comparison of time-domain responses for mass-spring-damper system.}
    \label{fig:msd}
\end{figure}
We see in the top plot of Figure~\ref{fig:msd} that all of the approaches capture the output of our benchmark problem well. The bottom plot of Figure~\ref{fig:msd} shows that the performance of the ROMs computed by Algorithm~\ref{alg:energyMOR} greatly depends on the choice $L$. In particular, the choice $L=0.1$ has the smallest $\cL_\infty$ error $\lVert y - \hat{y} \rVert_{\cL_\infty}$ and has a lower error than QOBT for most time steps. The choice $L=0.01$ performs well when the output is close to zero but has a larger output error in the interval $t \in [0,5]$ where $y(t)$ has a peak. Overall, the choice $L=1$ performs slightly worse than QOBT here. All of these observations match the local nature of our proposed approach. {Especially, we may explain the superior performance for the choice of $L=0.1$ by considering that the states for our choice of $\bu$ are contained in the ball $\cB_n(0.102)$. This means we maximize the observability energy in a subdomain of the state-space which contains states that are particularly relevant for the time-domain simulation considered here.} Additionally, all of the computed ROMs were asymptotically stable. While there are theoretical guarantees for this fact in the QOBT framework, it is not yet clear whether this always holds for our proposed MOR procedure.

\subsection{Convection Diffusion Model}
\label{sec:convdiff}
\begin{figure}
    \centering
    \begin{tikzpicture}[font = \plotfontsize]
      \pgfplotstableread{figures/conv_diff_cubic.dat}\tableINPUT
      \begin{axis}[%
        width  = .75\linewidth,
        height = .12\textheight,
        scale only axis,
        xmin = 0,
        xmax = 10,
        ymin = -0.2,
        ymax = 0.6,
        xminorticks = false,
        yminorticks = false,
        xlabel = {$t$},
        ylabel = {$y(t)$},
        ylabel style   = {yshift = -.3em},
        scaled x ticks = false,
        x tick label style = {/pgf/number format/1000 sep={\,}},
        y tick label style = {/pgf/number format/1000 sep={\,}},
        cycle list name    = plotlistconvdiff,
        legend style={anchor=north east},
        legend cell align={left}
      ]
    
        \foreach \y in {1, 2}{
          \addplot+ table[x index = 0, y index = \y] {\tableINPUT};
        }
        \legend{FOM, ROM}
      \end{axis}
    \end{tikzpicture}
    \begin{tikzpicture}[font = \plotfontsize]
      \pgfplotstableread{figures/conv_diff_cubic_error.dat}\tableINPUT
      \begin{axis}[%
        width  = .75\linewidth,
        height = .12\textheight,
        scale only axis,
        ymode = log,
        xmin = 0,
        xmax = 10,
        ymin = 1e-10,
        ymax = 1e-4,
        xminorticks = false,
        yminorticks = false,
        xlabel = {$t$},
        ylabel = {$|y(t) - \hy(t)|$},
        ylabel style   = {yshift = -.3em},
        scaled x ticks = false,
        x tick label style = {/pgf/number format/1000 sep={\,}},
        y tick label style = {/pgf/number format/1000 sep={\,}},
        cycle list name    = plotlistconvdifferror,
        legend style={at={(0.9825,0.2275)},anchor=north east},
        legend cell align={left}
      ]
    
        \foreach \y in {1}{
          \addplot+ table[x index = 0, y index = \y] {\tableINPUT};
        }
        \legend{Error}
      \end{axis}
    \end{tikzpicture}
    \caption{Comparison of time-domain responses for convection diffusion model.}
    \label{fig:convdiff}
\end{figure}
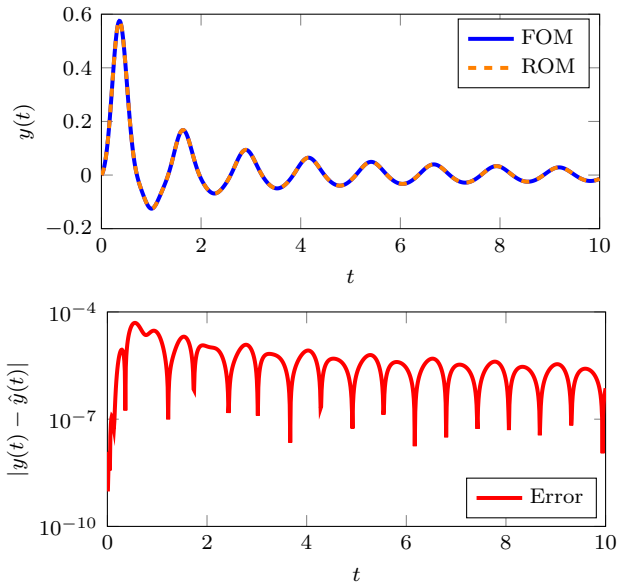
Next, we consider the convection diffusion equation
\begin{equation*}
    \begin{aligned}
        \dot{c} - \Delta c + \bv \cdot \nabla c &= f \quad \text{in } \Omega = [0,1] \times [0,1], \\
        c &= 0 \quad \text{on } \partial \Omega.
    \end{aligned}
\end{equation*}
as a benchmark problem. We use a spatial discretization of the above PDE using an equidistant mesh and finite differences leading to the same system matrices as the ones considered in \cite{Kressner10}. In the following we choose $v=1$ for the convection coefficient and an output corresponding to the degree-$3$ polynomial
\begin{align*}
    y(t) &= \bc_1^\top \bx(t) + \bc_2^\top \left(\bx(t) \otimes \bx(t) \right) \\
    &\, + \bc_3^\top \left(\bx(t) \otimes \bx(t) \otimes \bx(t) \right).
\end{align*}
The output coefficients are defined as $\bc_1 = 10\be_1$, $\bc_2 = 100(\be_2 \otimes \be_2)$ and $\bc_3 = 1000(\be_3 \otimes \be_3 \otimes \be_3)$, where $\be_k \in \R^n$ is the $k$-th unit vector. As in \eqref{eq:posys} the input enters the state equation via $\bB u(t)$ where $\bB = \mathbbm{1}_n$ is the vector of all ones. As a system dimension we use $n=2000$ and we reduce to an order $r=15$ model using Algorithm~\ref{alg:energyMOR} with $L=1$. We emphasize that for this example we need to compute the coefficients of $\coE$ in~\eqref{eq:observabilitypolynomial} up to the vector $\bw_6$ since $d=3$ in this case. For the system dimension $n=2000$ considered here, naively solving the systems in \eqref{eq:kroneckersystem} and storing the full vectors is not feasible. For example, $\bw_6$ alone would require $2.56 \times 10^{11}$ Gigabytes of storage in single precision. Therefore, it is crucial to leverage low-rank solvers according to our discussion in Section~\ref{sec:lrsolutions}. In particular, we leverage Algorithm~\ref{alg:quadrature} for the computations performed in this section. We visualize the time-domain response using the input $u(t) = \frac{100}{t+1}\sin(5t)$ in the top of Figure~\ref{fig:convdiff} and the corresponding error in the bottom of the Figure. The ROM captures the input-output behavior well for the significant reduction in the model order that we consider here. Again, the ROM preserves the asymptotic stability of the FOM.

\section{Conclusion}
\label{sec:conclusion}
We showed that energy functions of LPO systems are again polynomials and gave explicit formulae for computing the polynomial coefficients. The corresponding linear systems have a Kronecker product structure and can in certain situations be solved efficiently using low-rank CP decompositions. We discussed a new perspective on the BT method for linear systems and extended it to LPO systems, yielding an effective new MOR technique for approximating linear systems with polynomial outputs. The effectiveness of the proposed algorithm were illustrated on two numerical examples.

\section*{Acknowledgments}
{This work was supported in part by the
National Science Foundation (NSF), United States under Grant No. CMMI-2130695.}

\bibliographystyle{plainurl}
\bibliography{references}

\appendix
\section{Proof of Lemma~\ref{lemma:energytraceformula}}
\label{sec:appendixA}
We begin by considering the homogeneous degree-$2\kappa$ polynomial
\begin{equation*}
    P_{2\kappa}(\bx) = \tbw_{2\kappa}^\top (\bx \otimes \cdots \otimes \bx),
\end{equation*}
where we assume that $\tbw_{2\kappa}$ is symmetric. The work by Folland \cite{Folland2001} derives equations for homogeneous polynomials over $n$-balls and $n$-spheres, which gives
\begin{equation*}
    \int_{\bx \in \cB_n(L)} P_{2\kappa}(\bx) \operatorname{d\bx} = \frac{L^{2\kappa + n}}{2\kappa + n} \int_{\bx \in \cS_n} P_{2\kappa}(\bx) \operatorname{d\bs(\bx)}.
\end{equation*}
Here, $\operatorname{d\bs(\bx)}$ denotes the surface measure on the $n$-sphere $\cS_n = \{ \bx \in \mathbb{R}^n : \lVert \bx \rVert_2 = 1\}$. Since $\tbw_{2\kappa}$ is symmetric it can be written in terms of a symmetric CP decomposition \cite{Comon08}
\begin{equation}
    \label{eq:symmetriccp}
    \tbw_{2\kappa} = \sum_{j=1}^R f_j \otimes \cdots \otimes f_j.
\end{equation}
In particular this means that we can write
\begin{equation}
    \label{eq:polynomialsum}
    P_{2\kappa}(\bx) = \sum_{j=1}^R(f_j^\top \bx)^{2\kappa}.
\end{equation}
Next, we consider the integral of the expression in \eqref{eq:polynomialsum}. The formula
\begin{equation}
    \label{eq:sphereintegral}
    \int_{\bx \in \cS_n} P_{2\kappa}(\bx) \operatorname{d\bs(\bx)} = \sum_{j=1}^R \lVert f_{j} \rVert^{2\kappa} \frac{(2\kappa-1)!! (n-2)!!}{(n+2\kappa-2)!!} A_{n}
\end{equation}
for this integral is given in \cite[Eq. 6.29]{Waldron2018}, where $A_{n}$ is the area of the hyper-surface $\cS_n$. Next, we use the fact that $\lVert f_{j} \rVert^2 = f_{j}^\top f_{j} = \operatorname{tr}(f_{j}^\top f_{j})$ to write
\begin{equation}
\begin{aligned}
    \label{eq:cptrace}
    \sum_{j=1}^{R} \lVert f_{j} \rVert^{2\kappa} &=  \sum_{j=1}^{R} \operatorname{tr}((f_{j}^\top f_{j})^\kappa) \\
    &= \sum_{j=1}^R \operatorname{tr}\left((f_{j} \otimes \cdots \otimes f_{j})^\top (f_{j} \otimes \cdots \otimes f_{j})\right).
\end{aligned}
\end{equation}
Then, we define
\begin{equation*}
    \tbW_{2\kappa} = \sum_{j=1}^R(f_{j} \otimes \cdots \otimes f_{j})(f_{j} \otimes \cdots \otimes f_{j})^\top \in \mathbb{R}^{n^\kappa \times n^\kappa}.
\end{equation*}
By considering the linear and cyclic property of the trace we note that \eqref{eq:cptrace} is equal to $\operatorname{tr}(\tbW_{2\kappa})$:
\begin{equation}
    \label{eq:cptotrace}
    \sum_{j=1}^{R} \lVert f_{j} \rVert^{2\kappa} = \operatorname{tr}(\tbW_{2\kappa}).
\end{equation}
Note that $\tbW_{2\kappa}$ can be obtained via the \texttt{reshape} command in Matlab as 
$    \tbW_{2\kappa} = \texttt{reshape}(\bw_{2\kappa},n^\kappa,n^\kappa)$.
Next, we insert $\eqref{eq:cptotrace}$ into \eqref{eq:sphereintegral}, divide both sides by $V_n(L)$ and use the fact that $A_n/V_n(L) = n/L^n$ \cite{Folland2001} to obtain
\begin{align*}
    \frac{1}{V_n(L)}&\int_{\bx \in \cB_n(L)} P_{2\kappa}(\bx) \operatorname{d\bx} \\
    &= \frac{L^{2\kappa + n}}{2\kappa + n}  \frac{(2\kappa-1)!! (n-2)!!}{(n+2\kappa-2)!!} \frac{A_{n}}{V_n(L)} \operatorname{tr}(\tbW_{2\kappa}) \\
    &= L^{2\kappa} \frac{(2\kappa-1)!! n!!}{(n+2\kappa)!!} \operatorname{tr}(\tbW_{2\kappa}).
\end{align*}
Finally, we use the fact that integrals of homogeneous polynomials with odd degree over $\cB_n$ are zero \cite{Folland2001} which yields
\begin{align*}
F(\tbQ) &= \frac{1}{V_n(L)}\sum_{\kappa=1}^d \int_{\bx \in \cB_n(L)} P_{2\kappa}(\tbQ \tbQ^\top \bx) \operatorname{d\bx} \\
 &= \sum_{\kappa=1}^d c_{\kappa}(n,L) F_{2\kappa}(\tbQ),
\end{align*}
where 
\begin{equation*}
    F_{2\kappa}(\tbQ) = \operatorname{tr}((\tbQ \otimes \cdots \otimes \tbQ)^\top\tbW_{2\kappa} (\tbQ \otimes \cdots \otimes \tbQ)).
\end{equation*}
\section{Derivation of Gradient Formula}
\label{sec:appendixB}
Here we derive an expression for the gradient of
\begin{align*}
    F_{2\kappa}(\tbQ) = \operatorname{tr}((\tbQ \otimes \cdots \otimes \tbQ)^\top\tbW_{2\kappa} (\tbQ \otimes \cdots \otimes \tbQ)).
\end{align*}
As in \ref{sec:appendixA}, we assume that $\tbW_{2\kappa}$ is the square matricization of $\tbw_{2\kappa}$, which has a symmetric CP decomposition along the lines of \eqref{eq:symmetriccp}. This allows for writing
\begin{equation*}
    F_{2\kappa}(\tbQ) = \sum_{j=1}^R \lVert \tbQ^\top f_{j} \rVert^{2\kappa}.
\end{equation*}
The gradient can then be written as
\begin{align*}
    \nabla F_{2\kappa}(\tbQ) &= 2\kappa \sum_{j=1}^R \lVert \tbQ^\top f_{j} \rVert^{2\kappa - 2} f_{j} f_j^\top \tbQ \\
    &= 2\kappa \sum_{j=1}^R \sum_{i=1}^r (f_j \otimes \cdots \otimes f_j)^\top \\
    & \hspace{2cm} (q_i\otimes \cdots \otimes q_i) f_{j} f_j^\top \tbQ \\
    &= 2\kappa \sum_{i=1}^r \sum_{j=1}^R (f_j^\top \otimes \cdots \otimes f_j^\top \otimes f_j) \\ 
    & \hspace{2cm} (q_i \otimes \cdots \otimes q_i \otimes \tbQ) \\
    &= 2\kappa \sum_{i=1}^r \operatorname{mat}(\tbw_{2\kappa}) (q_i \otimes \cdots \otimes q_i \otimes \tbQ),
\end{align*}
where $q_i := \tbQ(:,i)$ and $\operatorname{mat}(\cdot)$ denotes the mode-$1$ matricization of the input tensor \cite{Kolda09}.

\end{document}